\documentclass[12pt,draftcls,onecolumn]{IEEEtran}

\usepackage{hyperref}  

\usepackage{xcolor}
\usepackage{pdfsync}
\usepackage{graphicx}
\graphicspath{{figures/}}
\usepackage{epsfig}
\usepackage{amsmath}
\usepackage{amssymb}
\usepackage{amsthm}
\usepackage{mathtools}
\usepackage{algorithm}
\usepackage{algorithmicx}
\usepackage{algpseudocode}
\usepackage{amsfonts}
\usepackage{lineno}
\usepackage{tikz}
\usetikzlibrary{positioning,shapes}
\usetikzlibrary{arrows}

\allowdisplaybreaks
\newtheorem{theorem}{Theorem}
\newtheorem{definition}{Definition}
\newtheorem{corollary}{Corollary}
\newtheorem{lemma}{Lemma}
\newtheorem{assumption}{Assumption}
\newtheorem{proposition}{Proposition}
\newtheorem{remark}{Remark}

\newtheorem{problem}{Problem}

\def\field#1{\mathbb #1}%
\def\R{\field{R}}%
\def\N{\field{N}}%
\def\Z{\field{Z}}%
\def\B{\field{B}}%
\newcommand{\X}{\ensuremath{\mathcal X}}

\newcommand{\U}{\ensuremath{\mathcal U}}

\newcommand{\Rp}{\R_{\geq 0}}

\newcommand{\Zp}{\Z_{\geq 0}}
\def\K{\mathcal{K}}%
\DeclareMathOperator{\id}{id}

\def\KL{\mathcal{KL}}%
\def\Kinf{\mathcal{K}_\infty}%

\def\A{\mathcal{A}}

\def\V{\mathcal{V}}

\def\Ni{\mathcal{N}_i}

\def\C{\mathcal{C}}
\def\Ci{\mathcal{C}_i}
\def\Cij{\mathcal{C}_i (j)}

\def\dist{\mathrm{dist}}
\def\diam{\mathrm{diam}}

\def\ri{\mathrm{ri}}
\def\co{\mathrm{co}}
\def\rank{\mathrm{rank}}

\def\Zb{\bar{\mathcal{Z}}}
\def\zbi{\bar{z}_i}
\let\rb=\partial
\usepackage{color}

\usepackage{xspace}

\providecommand{\afcl}{\textcolor{black}{admissible finite-step feedback control law}\xspace}
\providecommand{\afcls}{\textcolor{black}{admissible finite-step feedback control laws}\xspace}

\usepackage{enumerate}

\title{Distributed Multi-Step Model Predictive Control for Consensus}

\author{Navid~Noroozi%
\thanks{N. Noroozi is with SIGNON Deutschland GmbH, R\"udesheimer Str.~7,
80686 Munich, Germany (e-mail: \href{mailto:navid.n.noroozi@deutschebahn.com}{navid.n.noroozi@deutschebahn.com}).}%
}

\begin{document}
\maketitle
\begin{abstract}
This paper studies consensus of discrete-time multi-agent systems under time-varying directed communication, state and input constraints using a distributed multi-step model predictive control (MPC) framework. 
Consensus is recast as stabilization of the agreement set, and a geometric viewpoint based on convex-hull invariance and strict interiority is adopted. Building on an existing geometric necessary and sufficient condition for agreement, we show that enforcing terminal inclusion in local neighbor convex hulls guarantees hull invariance but does not, in general, imply the strict relative-interior property required for convergence. An explicit counterexample demonstrates that strictness cannot be deduced from feasibility and contraction constraints alone.

To resolve this issue without shrinking feasible sets or altering primary performance objectives, a lexicographic tie-breaking mechanism is introduced. Among optimal (or near-optimal) MPC solutions, the proposed secondary criterion selects trajectories maximizing an interiority measure with respect to the neighbor hull. It is shown that whenever an interior feasible terminal state exists, this selection rule enforces the strictness condition required for asymptotic consensus. Explicit horizon conditions are derived for single- and double-integrator agents with bounded inputs, ensuring feasibility and automatic existence of interior feasible terminal points. The resulting scheme provides a distributed and implementable route to consensus via finite-step set-Lyapunov contraction. Numerical simulations with distributed inter-process communication illustrate monotone diameter decay and report per-agent computational complexity.
\end{abstract}

\tableofcontents

\section{Introduction}

Consensus and agreement in networked multi-agent systems are fundamental in distributed estimation, cooperative robotics, and large-scale cyber--physical systems.
A classical geometric viewpoint interprets consensus as \emph{set stabilization} of the agreement set and characterizes convergence through convex-hull invariance and strictness properties of the induced state update map.
In particular, Moreau's seminal results~\cite{Moreau.2005} establish necessary and sufficient conditions for asymptotic agreement under time-varying directed communication, relying on (i) convex-hull inclusion and (ii) a strictness condition excluding boundary-invariant trajectories, together with suitable connectivity assumptions.

Model predictive control (MPC) provides a systematic framework to incorporate constraints, performance objectives, and prediction models.
This has motivated a large body of work on distributed MPC (DMPC) for cooperative control and agreement.
Early distributed receding-horizon schemes addressed stabilization and formation problems by coupling local MPC updates through exchanged predicted trajectories and by imposing compatibility constraints to ensure closed-loop stability~\cite{Dunbar.2006,Gruene.2012a}.
Building on this direction, Ferrari-Trecate \emph{et al.} developed MPC schemes tailored to consensus of agents with single- and double-integrator dynamics under bounded inputs and time-varying communication, with convergence proofs that leverage geometric properties of optimal trajectories and Moreau-type strictness arguments \cite{Ferrari-Trecate.2009}.
Their work remains one of the closest references to the present paper in terms of problem formulation (consensus under constraints and directed switching graphs) and emphasis on implementable MPC protocols for integrator models.

Beyond integrator consensus, DMPC consensus has been studied for broader linear and nonlinear multi-agent models and for additional network effects.
For example, robust DMPC frameworks have been proposed to handle input constraints together with bounded time-varying communication delays, establishing recursive feasibility and robust convergence for linear discrete-time MASs~\cite{Wei.2015}.
Consensus-oriented DMPC has also been combined with reduced communication mechanisms such as (self-)triggered or intermittent exchange of predicted trajectories while maintaining feasibility and convergence guarantees \cite{Zhan.2018}.
For nonlinear or higher-order agents, distributed MPC designs have been investigated to achieve consensus-like objectives under constraints (e.g., second-order nonlinear MASs) using tailored terminal ingredients and feasibility arguments \cite{Gao.2017}.
More recently, the scope has expanded toward \emph{optimal} and \emph{output} consensus of heterogeneous agents over directed graphs, where the DMPC problem is formulated to balance transient performance with convergence to an optimal agreement point \cite{Bai.2025}.
Complementary tutorial and survey articles summarize algorithmic paradigms and stability mechanisms in DMPC, including coupling architectures, feasibility preservation, and convergence tools~\cite{Scattolini.2009}.

\subsection{Motivation and gap}
Despite this progress, a recurring conceptual difficulty in DMPC consensus is the role of \emph{strictness}.
Moreau-type consensus theorems require, in one form or another, that when disagreement is present, the next iteration enters the relative interior of a suitable neighbor hull, thereby preventing trajectories from remaining trapped on invariant faces of the global convex hull \cite{Moreau.2005}.
In many MPC-based protocols, convex-hull \emph{invariance} can be enforced directly by terminal constraints, but \emph{strict} interiority is often obtained indirectly via problem-specific geometric arguments (e.g., nonzero first step for integrators) or by strengthening terminal conditions in ways that may shrink feasibility sets and complicate robustness claims \cite{Ferrari-Trecate.2009}.
For general nonlinear dynamics and general convex constraints, it is therefore natural to ask:
\emph{Which strictness property is truly necessary for consensus, and how can it be enforced by an MPC implementation without altering feasibility?}

\subsection{Contributions}
This paper proposes a distributed multi-step MPC framework for asymptotic consensus in discrete-time networks that makes the strictness issue explicit and resolves it with a minimal implementation mechanism.
Our approach combines (i) a design-neutral geometric consensus core, (ii) a finite-step Lyapunov viewpoint for the agreement set, and (iii) a feasibility-preserving tie-breaking rule that enforces the required interiority whenever it is achievable.

\emph{Consensus as finite-step set stabilization:}
We recast consensus as stabilization of the agreement set and employ the disagreement diameter as a proper set-Lyapunov candidate.
We restate a Moreau-type necessary/sufficient consensus theorem in our notation for time-varying directed graphs, highlighting that convex-hull inclusion and a strictness (relative-interior) property form the essential geometric ingredients.

\emph{A distributed multi-step MPC template enforcing hull inclusion and finite-step barycenter contraction:}
We introduce a local MPC problem that (a) constrains the terminal state to lie in the local neighbor convex hull, guaranteeing convex-hull invariance, and (b) enforces a finite-step contraction inequality toward a local barycenter.
This yields a modular route to consensus via finite-step Lyapunov decay~\cite{Geiselhart.2014c}, provided that strictness can be ensured along closed-loop trajectories.
We note that the \emph{feasibility of a finite-step Lyapunov decay is ensured by a constructive converse control finite-step Lyapunov function theorem}, see~\cite{Noroozi.2020} for more details.

\emph{Impossibility of deducing strictness from the baseline OCP and a minimal remedy:}
We show by an explicit counterexample that, even under feasibility and joint connectivity, the baseline OCP does \emph{not} imply the required relative-interior strictness: feasible (and even cost-improving) solutions may lie on the boundary of the neighbor hull, allowing boundary-invariant behavior.
To remedy this without shrinking the feasible set or changing the primary objective value, we introduce a two-stage \emph{lexicographic} selection rule:
among all (near-)optimal primary solutions, select one that maximizes an interiority measure (distance to the hull boundary).
We prove that whenever an interior feasible terminal point exists, this tie-break enforces relative-interior updates and restores the strictness condition required by the geometric consensus theorem.

\emph{Explicit integrator specializations and computational evidence:}
For single- and double-integrator agents with input bounds (and no additional state constraints), we derive explicit horizon conditions guaranteeing feasibility and automatic existence of interior feasible terminal points, leading to consensus under the proposed lexicographic implementation.
We provide distributed numerical simulations (including per-agent solve-time statistics and problem dimensions) demonstrating monotone decay of the diameter and illustrating if the lexicographic stage is activated or not in practice.

\subsection{Relation to prior DMPC consensus work}
The closest comparison is to \cite{Ferrari-Trecate.2009}, which studies MPC consensus for single- and double-integrator models and proves strictness using integrator-specific optimal-trajectory geometry.
In contrast, the present paper (a) separates feasibility from strictness at the level of implemented closed-loop updates, (b) provides an impossibility result explaining why strictness cannot be expected from a generic terminal hull constraint plus a contractive inequality alone, and (c) enforces strictness via a minimal feasibility-preserving selection rule.
Compared with robust/delay-aware DMPC consensus designs such as \cite{Wei.2015}, our emphasis is not on delay compensation but on identifying and enforcing the geometric strictness mechanism needed for agreement under general nonlinear dynamics and time-varying directed graphs.
Finally, relative to recent optimal/output-consensus DMPC formulations over directed graphs \cite{Bai.2025}, our focus is on a set-stabilization/finite-step Lyapunov pathway to \emph{asymptotic} agreement, together with an implementation mechanism that guarantees the strictness condition required by the underlying consensus theorem.

under feasibility and connectivity.

\subsection{Organization}
Section~\ref{sec:preliminaries} introduces the preliminary mathematical notation.
Section~\ref{sec:problem-formulation} formulates the problem setting and the multi-step information pattern.
Section~\ref{sec:nec-suf-moreau} establishes the geometric consensus conditions and their connection to set stabilization.
Section~\ref{subsec:dmpc} presents the distributed multi-step MPC formulation for the consensus problem of interest.
Sections~\ref{sec:impossibility} and~\ref{sec:OCP_with_lex_sel}, respectively, analyze why the baseline OCP does not ensure strictness and develop the lexicographic selection mechanism, culminating in the main consensus theorem.
Section~\ref{sec:linear-dynamics} specifies the results for agents with linear dynamics. In particular, it provides explicit integrator specializations.
Section~\ref{sec:numerical-simulations} reports distributed numerical simulations and computational complexity measures.

\section{Preliminaries}
\label{sec:preliminaries}
This section covers notational and conceptual preliminaries, required in the next section.

\paragraph{Norms and balls}
Let $\|\cdot\|$ be a norm on $\mathbb{R}^d$. 
For $r\ge 0$, $c \geq 0$, denote $\mathbb{B}_r(c):=\{x\in\mathbb{R}^d:\|x - c \|\le r\}$.
For $c = 0$, we drop the argument $c$ from the notation, i.e. $\mathbb{B}_r := \mathbb{B}_r(0)$, moreover, $\mathbb{B}:=\mathbb{B}_1$. 
On $(\mathbb{R}^d)^\ell$ we use the product Euclidean norm
\[
\|x\| := \Big(\sum_{i=1}^{\ell}\|x_i\|^2\Big)^{1/2}.
\]

\paragraph{Distance to a closed set}
For a nonempty closed set $\A \subseteq \mathbb{R}^n$, define the distance
\[
|x|_A := \inf_{y\in \A}\|x-y\|.
\]
For $\A$ closed, $|x|_\A=0\iff x\in \A$, and $x\mapsto |x|_\A$ is Lipschitz.

\paragraph{Convex hull}
For a finite set $\{x_1,\dots,x_m\}$ define its convex hull
\[
\operatorname{co}\{x_1,\dots,x_m\}:=\Big\{\sum_{k=1}^m\lambda_k x_k:\ \lambda_k\ge 0,\ \sum_{k=1}^m\lambda_k=1\Big\}.
\]
In the sequel, we often abbreviate $\C := \operatorname{co}\{x_1,\dots,x_m\}$.
Let $\mathrm{aff}(\C)$ denote the affine hull of $\C$. We interpret $\partial \C$ and $\mathrm{dist}(\cdot,\partial \C)$
as \emph{relative} notions within $\mathrm{aff}(\C)$, i.e.\ $\partial \C$ is the topological boundary of $\C$ as a subset of $\mathrm{aff}(\C)$, and distances are Euclidean distances in $\mathbb R^d$
(which agree with distances measured within $\mathrm{aff}(\C)$).

\paragraph{Comparison function classes}
A continuous function $\alpha:\mathbb{R}_{\ge 0}\to\mathbb{R}_{\ge 0}$ is in class $\mathcal{K}$
if $\alpha$ is strictly increasing and $\alpha(0)=0$. It is in $\mathcal{K}_\infty$ if additionally $\alpha(s)\to\infty$ as $s\to\infty$.
A function $\beta:\mathbb{R}_{\ge 0}\times\mathbb{R}_{\ge 0}\to\mathbb{R}_{\ge 0}$ is in $\mathcal{KL}$ if
$\beta(\cdot,t)\in\mathcal{K}$ for all $t$ and $\beta(s,t)\to 0$ as $t\to\infty$ for each fixed $s$.
The identity function is denoted by $\id$.

\section{Problem Formulation}\label{sec:problem-formulation}

Consider $\ell\in\mathbb{N}$ agents with discrete-time dynamics
\begin{equation}\label{eq:agent_dyn}
x_i(t+1)=g_i(x_i(t),u_i(t)),\qquad i=1,\dots,\ell,
\end{equation}
where $x_i(t)\in \X_i\subseteq\mathbb{R}^d$ and $u_i(t)\in \U_i\subseteq\mathbb{R}^{m_i}$.

\begin{assumption}[$\K$-boundedness]\label{ass:K-boundedness}
For each $i$, the map $g_i:\mathbb{R}^d\times\mathbb{R}^{m_i}\to\mathbb{R}^d$ is continuous and locally Lipschitz in $(x_i,u_i)$.
Moreover, it is $\K$-bounded: there exist $\kappa_{1,i},\kappa_{2,i}\in\K_\infty$ such that
\[
\|g_i(x_i,u_i)\|\le \kappa_{1,i}(\|x_i\|)+\kappa_{2,i}(\|u_i\|),\qquad \forall (x_i,u_i) \in ( \X_i, \U_i ).
\]
\end{assumption}

Assumption~\ref{ass:K-boundedness} ensures well-posedness of trajectories under bounded inputs and is standard in nonlinear MPC and nonlinear systems stability analysis~\cite{Grune.2014,Geiselhart.2014c}.

\subsection{Graph theory and connectivity}
\label{subsec:graph_stability}

\paragraph{Communication graph and neighbors.}
At outer time index $j$ (defined below), the communication structure is a directed graph
$\mathcal{G}(j)=(\mathcal{V},\mathcal{E}(j))$, with $\mathcal{V}=\{1,\dots,\ell\}$.
The neighbor set of agent $i$ at time $j$ is
\[
\Ni(j):=\{k\in\mathcal{V}:\ (k,i)\in\mathcal{E}(j)\}.
\]

\paragraph{Joint connectivity (bounded window).}
\begin{assumption}[Joint spanning-tree connectivity]\label{ass:joint_tree}
There exists $B\in\mathbb{N}$ such that for every $j \in \Zp$, the union graph
\[
\mathcal{G}[j,j+B-1] := (\mathcal{V},\ \cup_{s=j}^{j+B-1}\mathcal{E}(s))
\]
contains a directed spanning tree. In the bidirectional case, this is equivalent to joint connectivity.
\end{assumption}

\subsection{Finite-step control Lyapunov function and barycenter tracking}
\paragraph{Outer sampling and information pattern}
Fix $M\in\N$. 
Define sampled states (outer index)
\begin{equation}
z_i(j):=x_i(jM),\qquad z(j)=(z_1(j),\dots,z_\ell(j)).
\end{equation}
At each outer time $j$, each agent receives the neighbor samples $\{z_k(j):k\in\mathcal{N}_i(j)\}$ \emph{only} and then
computes an $M$-step open-loop control sequence to apply on inner times $t=jM,\dots,jM+M-1$.

\paragraph{Local barycenter.}
Define the local (time-varying) barycenter as
\begin{equation}\label{eq:barycenter}
\bar z_i(j):=\frac{1}{1+|\mathcal{N}_i(j)|}\sum_{k\in\{i\}\cup\mathcal{N}_i(j)} z_k(j).    
\end{equation}
We denote the set covering the evolution of all possible local barycenter trajectories $\zbi$ over time by $\Zb_i$.

\paragraph{Finite-step admissible control}
Fix $M\in\N$.
Given an outer time $j \geq 0$, let ${\bf u}_i = (u_i(0), u_i(1), \ldots)$ denote a possibly infinite control sequence for agent~\eqref{eq:agent_dyn}, where $u_i(t)\in \U_i$ for all $t=0,1,\ldots$.
If we only study trajectories of~\eqref{eq:agent_dyn} over a finite horizon, we might restrict to finite control sequences denoted by $\mathbf{u}_i(0:k-1) := (u_i(0),\dots,u_i(k-1)) \in \U_i^k$.

Denote $e_i (jM) := x_i(jM) - \bar{z}(j)$, with $x_i(jM) \in \X_i$ and $\bar{z}_i(j)$ as in~\eqref{eq:barycenter}, $\bar{z}_i \in \Zb_i$.
We aim to compute any control sequence $\bar{u}_i$ such that the state $x_i$ tracks the local barycenter trajectory $\bar{z}_i$, i.e. the error trajectory $\|e_i(jM)\| \to 0$ as $j \to \infty$.

\begin{definition} \label{def:afcl}
Let $M \in \N$ and $j \in \Zp$ be given.
Consider an agent~\eqref{eq:agent_dyn} satisfying Assumption~\ref{ass:K-boundedness} and a map $q_i: \X_i \times \Zb_i \to \U_i^M$.
Partition $q_i =: (q_i^1,\dots,q_i^M)$, where $q_i^k \in \U_i$ for each $k = 1,\dots,M$.
The map ${q_i}$ is called an \emph{\afcl} if for all $x_i(jM) \in \X_i, \bar{z}_i (j) \in \Zb_i$ and all $k = 1,\dots,M$, $g_i \big(x_i (jM+k-1),q_i^k (x_i(jM),\bar{z}_i(j) )\big)\in \X_i$.
\end{definition}
Now we define a notion of asymptotic barycenter tracking for each agent $i$.
\begin{definition}[Asymptotic barycenter tracking] \label{def:barycenter-tracking}
Consider a network of agents~\eqref{eq:agent_dyn} with an associated directed graph $(\V,\mathcal{E}(j))$.
Let $M \in \N$ be given.
For each agent $i \in \V$, $q_i \colon \X_i \times \Zb_i \to \U_i^M$ is an \afcl.
We say that agent $i$ asymptotically tracks its associated local barycenter trajectory $\zbi$ under the local \afcl $q_i$ if there exists $\beta_i \in \KL$ such that for all $x_i(0) \in \X_i$, all $j \in \Zp$ and all $\zbi (j) \in \Zb_i$ we have
\begin{equation} \label{eq:asymp-estimate}
\|x_i (jM) - \zbi (j)\| \leq \beta_i\big(\|x_i(0) - \bar{z}_i(0)\|, j \big) .
\end{equation}
\end{definition}
Following the converse Lyapunov function~\cite[Theorem~10]{Noroozi.2020}, under the decay condition $\beta(r,M) < r$ for $\beta\in\KL$ in~\eqref{eq:asymp-estimate},
the existence of \afcl implies any scaling version of the norm function $\|\cdot\|$ itself is a so-called \emph{finite-step} control Lyapunov
function for the system
converse control finite-step Lyapunov function theorem, see~\cite{Noroozi.2020} for more details.
This can be technically summarized as follows, which is a restatement of~\cite[Theorem~10]{Noroozi.2020} in our current context:

\begin{proposition}\label{prop:fsCLF}
Let $M \in \N$ and $j \in \Zp$ be given.
Consider an agent~\eqref{eq:agent_dyn} and an \afcl $q_i: \X_i \times \Zb_i \to \U_i^M$.
Assume that the resulting closed-loop system satisfies~\eqref{eq:asymp-estimate} with $\beta(r,M) < r$ for all $r > 0$.
Then we have that for all $j \geq 0$
\begin{equation} \label{eq:M-decay}
\|x_i ((j+1)M) - \bar{z}_i (j)\| \leq \alpha\big(\|x_i(jM) - \bar{z}_i(j)\|\big) ,
\end{equation}
where $\alpha (\cdot):= \beta(\cdot,M)$.
\end{proposition}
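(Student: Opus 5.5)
The plan is to obtain \eqref{eq:M-decay} directly from the $\KL$-estimate \eqref{eq:asymp-estimate}, using time-invariance of the agent dynamics \eqref{eq:agent_dyn} and the fact that the \afcl $q_i$ depends only on the current sampled state and the current barycenter. In other words, we restart the estimate at each outer sampling instant. The barycenter is held frozen at $\bar z_i(j)$ during the $M$ inner steps, so that is the reference in the estimate.

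\textbf{Step 1 (shift invariance).} Fix $j\ge 0$ and set $\xi:=x_i(jM)\in\X_i$ and $\zeta:=\bar z_i(j)\in\Zb_i$. The inner trajectory on $t=jM,\dots,jM+M$ is generated by $g_i$ with inputs $q_i^k(\xi,\zeta)$, $k=1,\dots,M$. This coincides with the trajectory started at time $0$ from the initial state $\xi$ and barycenter $\zeta$, because $g_i$ is time-invariant and $q_i$ has no explicit time argument. Definition~\ref{def:afcl} ensures that this restarted trajectory stays in $\X_i$, so it is admissible. Hence we may apply \eqref{eq:asymp-estimate} with initial data $(\xi,\zeta)$. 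Since \eqref{eq:asymp-estimate} holds for all $x_i(0)\in\X_i$ and all barycenter values in $\Zb_i$, choosing the barycenter value constant equal to $\zeta$ over one outer step is legitimate.

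\textbf{Step 2 (evaluate at one outer step).} Apply the estimate at outer index $1$:
\[
\|x_i((j+1)M)-\bar z_i(j)\|\le \beta\big(\|x_i(jM)-\bar z_i(j)\|,1\big).
\]
Here the time argument must be expressed in the paper's convention. If \eqref{eq:asymp-estimate} is read with $\beta$ measured in inner time, so that outer index $1$ corresponds to $M$ inner steps, the right-hand side is $\beta(\cdot,M)=\alpha(\cdot)$, which gives \eqref{eq:M-decay}. I would make this convention explicit, so that $\alpha:=\beta(\cdot,M)$ matches the statement. The hypothesis $\beta(r,M)<r$ is not needed for the inequality itself. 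Its role is to show that $\alpha<\id$, so that the norm is a genuine finite-step Lyapunov function, in line with \cite[Theorem~10]{Noroozi.2020}. Finally, $\alpha\in\K$ because $\beta(\cdot,M)\in\K$.

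\textbf{Main obstacle.} The delicate point is Step 1. I must justify that the $\KL$ bound, which is stated from time $0$ with a possibly time-varying barycenter, can be restarted at $jM$ with the barycenter frozen at $\bar z_i(j)$. I would handle this by reading Definition~\ref{def:barycenter-tracking} as uniform over initial data and over barycenter sequences in $\Zb_i$, and then invoking the restatement of \cite[Theorem~10]{Noroozi.2020} for the remaining details.
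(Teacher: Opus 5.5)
Your argument is correct. It differs from the paper mainly because the paper gives no argument of its own: it presents the proposition as a restatement of \cite[Theorem~10]{Noroozi.2020} and defers entirely to that converse finite-step control Lyapunov result. You prove it directly. You restart the closed loop at $jM$, using time-invariance of $g_i$, the absence of explicit time dependence in $q_i$, and admissibility from Definition~\ref{def:afcl}. You then freeze the reference at $\bar z_i(j)$. This is legitimate because the estimate is uniform over barycenter data in $\Zb_i$, and the state after one outer step depends only on $q_i(\xi,\zeta)$. Finally you read off the $\KL$ bound after one outer step. Your route makes explicit the interpretive steps that the bare citation leaves implicit in this barycenter-tracking setting: restarting with a frozen, time-varying reference, and the time scale of $\beta$. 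It also correctly shows that $\beta(r,M)<r$ is needed only to get $\alpha<\id$, not for the inequality itself.

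On the time convention, you should be more assertive: the inner-time reading is forced, not a matter of taste. If the outer index is literally the second argument of $\beta_i$ in \eqref{eq:asymp-estimate}, the proposition is false. Take a scalar single integrator with $M=2$, $\bar z_i\equiv 0$ and $q_i(\xi,\zeta)=(-0.1(\xi-\zeta),0)$, so that $x_i(2(j+1))=0.9\,x_i(2j)$. Then $\beta(r,t)=0.9^t r$ satisfies \eqref{eq:asymp-estimate} as written, and $\beta(r,2)=0.81\,r<r$. Yet $|x_i(2)|=0.9\,|x_i(0)|>\beta(|x_i(0)|,2)$ whenever $x_i(0)\neq 0$. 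Under the literal convention your Step~2 yields only $\alpha=\beta(\cdot,1)$, which cannot be exchanged for $\beta(\cdot,M)$. Once the bound at outer index $j$ is read as $\beta_i(\cdot,jM)$, your proof goes through verbatim.
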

The decay inequality~\eqref{eq:M-decay} will be encoded later into our optimization control problem to enforce the state variables $x_i$ towards the local barycenter $\zbi$, see~\eqref{eq:OCP} below.
Moreover, we note that the $\K$-boundedness condition as in Assumption~\ref{ass:K-boundedness} is necessary to conclude Proposition~\ref{prop:fsCLF}.
\subsection{Consensus problem and its reformulation as set stabilization}
\label{subsec:consensus_problem}

We aim to relate a consensus problem to a set stabilization problem.
Define the consensus set in $(\mathbb{R}^d)^\ell$ as
\[
\A:=\{x=(x_1,\dots,x_\ell): x_1=\cdots=x_\ell\}.
\]
Consensus is indeed set stabilization with respect to $\A$.
Towards this end, we introduce the following notions.
\paragraph{Local neighbor hull}
Given a network of agents~\eqref{eq:agent_dyn} with an associated directed graph
$\mathcal{G}(j)=(\mathcal{V},\mathcal{E}(j))$, define the \emph{local neighbor hull} as
\begin{equation}\label{eq:Ci_def_moreau}
\mathcal{C}_i(j):=\operatorname{co}\Big(\{z_i(j)\}\cup\{z_k(j):k\in\mathcal{N}_i(j)\}\Big) ,
\end{equation}

and the \emph{global hull} is defined by
\begin{equation}\label{eq:H_def_moreau}
H(z(j)):=\operatorname{co}\{z_1(j),\dots,z_\ell(j)\}.
\end{equation}
Now define the \emph{disagreement diameter} $V : \left(\R^d\right)^\ell \to \Rp$ by
\begin{equation}\label{eq:V_diameter}
V(z(j)):=\operatorname{diam}(H(z(j)))=\max_{p,q\in\mathcal{V}}\|z_p(j)-z_q(j)\|.
\end{equation}

\begin{lemma}[Diameter is a proper set-Lyapunov candidate]\label{lem:V_props}
Let $\A=\{z_1=\cdots=z_\ell\}$. Then:
\begin{enumerate}
\item $V(z)=0\iff z\in \A$ and $V(z)>0$ otherwise.
\item $V$ is continuous (globally Lipschitz) in $z$.
\item With $|z|_\A$ defined w.r.t.\ the product Euclidean norm, one has
\begin{equation}\label{eq:V_bounds}
\frac{1}{\sqrt{\ell}}|z|_\A \le V(z)\le 2|z|_\A.
\end{equation}
\end{enumerate}
\end{lemma}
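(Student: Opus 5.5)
The plan is to compute $|z|_\A$ explicitly and then compare it with the pairwise distances that define $V$. Since $\A$ is a linear subspace of $(\R^d)^\ell$, the distance in the product Euclidean norm is attained at the orthogonal projection onto $\A$. For the Euclidean norm this projection is $(\mu,\dots,\mu)$ with $\mu:=\frac1\ell\sum_k z_k$, so
\[
|z|_\A^2=\sum_{i=1}^\ell\|z_i-\mu\|^2 .
\]
If one prefers not to rely on Euclidean structure, it is enough to note that $|z|_\A^2=\min_{c\in\R^d}\sum_i\|z_i-c\|^2$, and this formulation is all the argument below uses.

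\textbf{Items 1 and 2.} For item 1, $V(z)=0$ holds iff $\|z_p-z_q\|=0$ for all $p,q$, which is iff $z\in\A$; otherwise some pair differs, so $V(z)>0$. For item 2, each map $z\mapsto\|z_p-z_q\|$ is Lipschitz. The reverse triangle inequality gives
\[
\big|\|z_p-z_q\|-\|w_p-w_q\|\big|\le\|z_p-w_p\|+\|z_q-w_q\|\le 2\|z-w\|.
\]
A maximum of finitely many $2$-Lipschitz functions is again $2$-Lipschitz, so $V$ is globally Lipschitz.

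\textbf{Upper bound $V\le 2|z|_\A$.} Let $c$ be the minimizer, so that $|z|_\A^2=\sum_i\|z_i-c\|^2$. Then for any $p,q$,
\[
\|z_p-z_q\|\le\|z_p-c\|+\|z_q-c\|\le 2\Big(\sum_i\|z_i-c\|^2\Big)^{1/2}=2|z|_\A .
\]
Taking the maximum over $p,q$ gives the claim. The constant could even be sharpened to $\sqrt2$ via $a+b\le\sqrt{2(a^2+b^2)}$, but $2$ is what is stated.

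\textbf{Lower bound $|z|_\A\le\sqrt\ell\,V(z)$.} Choose $c=z_1$, which is a feasible point of the minimization. Each $\|z_i-z_1\|\le V(z)$, so
\[
|z|_\A^2\le\sum_i\|z_i-z_1\|^2\le \ell\,V(z)^2 .
\]
This yields $\frac1{\sqrt\ell}|z|_\A\le V(z)$.

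No step is a real obstacle. The only point needing care is the identification of $|z|_\A$ with $\min_c\big(\sum_i\|z_i-c\|^2\big)^{1/2}$, which holds directly from the definition of $\A$ and the product norm, whatever norm is used on $\R^d$. Both bounds then follow from the simple choices of $c$ above, and the mean $\mu$ is never needed.
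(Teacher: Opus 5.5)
Your proof is correct. Its skeleton matches the paper's: items 1 and 2 come straight from the definition, and item 3 follows from the triangle inequality after bounding each summand by $V(z)$. Item 3, however, rests on a different mechanism.

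The paper computes $|z|_\A$ in closed form as $(\sum_i\|z_i-\bar z\|^2)^{1/2}$, with $\bar z$ the mean. That is, it uses that the nearest point of $\A$ is $(\bar z,\dots,\bar z)$. It then gets the lower bound from the averaging estimate $\|z_i-\bar z\|\le\frac1\ell\sum_k\|z_i-z_k\|\le V(z)$.

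You use only the variational form $|z|_\A=\inf_{c\in\R^d}(\sum_i\|z_i-c\|^2)^{1/2}$ and plug in convenient comparison points: $c=z_1$ for the lower bound, and an arbitrary $c$ for the upper bound.

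What each route buys:
\begin{itemize}
\item The paper's route also tells you which consensus configuration is nearest. However, its upper bound $V\le 2|z|_\A$ depends on the mean being that minimizer, which uses the inner-product structure of the norm on $\R^d$.
\item Your route needs no projection formula. It therefore holds verbatim for an arbitrary norm on $\R^d$, which the preliminaries in fact allow.
\item You also prove the claimed global Lipschitz property, with explicit constant $2$. The paper's argument for item 2 only establishes continuity.
\end{itemize}

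One cosmetic point: you need not speak of ``the minimizer,'' whose existence and uniqueness would need a word for a general norm. The bound $\|z_p-z_q\|\le 2(\sum_i\|z_i-c\|^2)^{1/2}$ holds for every $c$, so you can simply take the infimum.
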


\begin{proof}
(1) is immediate from \eqref{eq:V_diameter}. (2) holds because $V$ is the maximum of finitely many continuous functions.

For (3), let $\bar z:=\frac{1}{\ell}\sum_{i=1}^\ell z_i$. Then $|z|_\A=(\sum_i\|z_i-\bar z\|^2)^{1/2}$.
For any $p,q$,
\[
\|z_p-z_q\|\le \|z_p-\bar z\|+\|z_q-\bar z\|\le 2|z|_\A,
\]
hence $V(z)\le 2|z|_\A$.
Moreover, for any $i$,
\[
\|z_i-\bar z\|=\Big\|\frac{1}{\ell}\sum_{k=1}^\ell (z_i-z_k)\Big\|
\le \frac{1}{\ell}\sum_{k=1}^\ell \|z_i-z_k\|\le V(z).
\]
Thus $|z|_\A \le \sqrt{\ell}\max_i\|z_i-\bar z\|\le \sqrt{\ell}V(z)$, i.e.\ $V(z)\ge \frac{1}{\sqrt{\ell}}|z|_\A$.
\end{proof}
\begin{definition}[Asymptotic consensus]\label{def:asymp_consensus}
Consider the diameter function $V$ as in~\eqref{eq:V_diameter} for a network of agents~\eqref{eq:agent_dyn} with an associated directed graph
$\mathcal{G}(j)=(\mathcal{V},\mathcal{E}(j))$.
The network achieves asymptotic consensus if 
\[
\limsup_{j\to\infty} V\big(z(j)\big) \to 0 .
\]
\end{definition}

\section{Necessary and Sufficient Geometric Conditions for Consensus}\label{sec:nec-suf-moreau}
In~\cite{Moreau.2005}, Moreau presents a number of necessary and sufficient conditions for consensus for agents with nonlinear discrete-time dynamics.
Towards this end, he provides a (strict) convexity condition, given by~\cite[Assumption~1]{Moreau.2005} plus a number of graph connectivity conditions, see~\cite[Theorems~2,~3]{Moreau.2005}.
In this section, we reformulate these ~\cite[Assumption~1]{Moreau.2005} to our wording context.
\begin{assumption}[strict convexity]
\label{ass:asymp_moreau_A1}
Let $z(j)=(z_1(j),\dots,z_\ell(j))\in(\mathbb R^d)^\ell$ denote the agent variables at outer step $j\in\mathbb Z_{\ge0}$.
For each $i$ define the local hull
$
\Cij=\operatorname{co}\big(\{z_i(j)\}\cup\{z_k(j):k\in\mathcal N_i(j)\}\big).
$
There exists a constant $\eta\in[0,1)$ such that for every $i$ and $j$:
\begin{enumerate}
\item \textbf{convex-hull inclusion:}
\begin{equation}
z_i(j+1)\in \Cij .
\label{eq:A1a_eps_hull}
\end{equation}
\item \textbf{Relative interior (RI) strictness when disagreement is nonzero:}
If $z_i(j)$ is not equal to all neighbor values, i.e. if
\[
z_i(j)\notin \bigcap_{k\in\mathcal N_i(j)}\{z_k(j)\},
\]
then
\begin{equation}
z_i(j+1)\in \ri\big(\Cij\big).
\label{eq:A1b_ri_strictness}
\end{equation}
where $\mathrm{diam}(\mathcal C):=\sup\{\|x-y\|:x,y\in\mathcal C\}$ and $\partial \mathcal C$ is the boundary.
\end{enumerate}
\end{assumption}

Now we pack and restate~\cite[Theorem~1, Theorem~2]{Moreau.2005} in the following theorem.

\begin{theorem}[necessary and sufficient conditions for asymptotic consensus]
\label{thm:moreau_nes_suf}
Consider a network of agents~\eqref{eq:agent_dyn} with an associated directed graph $(\mathcal{V},\mathcal{E}(j))$.
Consider any trajectory $z(\cdot)$ generated by a distributed update law $u(\cdot)$.

\smallskip
\noindent\textbf{(Necessity)}  
Assume that asymptotic consensus holds for all initial conditions and for all trajectories of the update law,
and assume further that the update law is \emph{local} in the sense that $z_i(j+1)$ depends only on
$\{z_k(j):k\in\{i\}\cup\mathcal N_i(j)\}$.
Then necessarily:
\begin{enumerate}
\item[\textbf{(N1)}] the hull inclusion \eqref{eq:A1a_eps_hull} holds;
\item[\textbf{(N2)}] the joint spanning-tree condition of Assumption~\ref{ass:joint_tree} holds.
\end{enumerate}
\smallskip
\noindent\textbf{(Sufficiency)}  
If Assumptions~\ref{ass:asymp_moreau_A1} and \ref{ass:joint_tree} hold, then $z(\cdot)$ satisfies
asymptotic consensus in the sense of Definition~\ref{def:asymp_consensus}.
\end{theorem}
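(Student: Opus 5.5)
The argument splits along the two directions of Theorem~\ref{thm:moreau_nes_suf}. For necessity I argue by contraposition in both parts, building adversarial initial conditions that the local update law cannot tell apart from harmless ones. For sufficiency I adapt Moreau's compactness and contraction argument, using the diameter $V$ from Lemma~\ref{lem:V_props} as the set-Lyapunov function.

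\textbf{Necessity (N1).} Suppose that for some $i$, some $j$ and some neighbor configuration, $z_i(j+1)\notin\Cij$. Since the law is local, $z_i(j+1)$ depends only on the values in $\{i\}\cup\mathcal N_i(j)$. I will use the translation/consensus-set invariance implicit in the set-stabilization formulation: if all agents start at one common point, consensus must be preserved. First take the extreme case in which $i$ and all its neighbors coincide, so that $\Cij$ is a single point. An exit from $\Cij$ then destroys an existing agreement, and I will try to show this can be repeated along a trajectory so that $V$ does not tend to $0$. In the general case, I keep the same local data for $i$ and its neighbors and place the non-neighbors at the extreme points of $\Cij$. Then $H(z(j))=\Cij$ and $z_i(j+1)\notin H(z(j))$. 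Iterating this adversarially, with time-varying graphs allowing the choice at every step, should produce a nondecaying diameter.

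\textbf{Necessity (N2).} If the spanning-tree condition fails on arbitrarily long windows, the union graph over such a window splits into at least two closed groups, each receiving no information from outside. I initialize these groups at distinct constants $a\neq b$. Each group sees only its own agreed value, and by (N1) its states stay at that value, so $V\ge\|a-b\|$ over the whole window. Since such windows occur infinitely often with growing length, I need to place the separated groups so that this contradicts convergence. I expect the cleanest route is to cite Moreau's Theorem~2 converse directly for this part.

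\textbf{Sufficiency.} By hull inclusion, $\Cij\subseteq H(z(j))$, so $H(z(j+1))\subseteq H(z(j))$ and $V$ is nonincreasing. The hulls are therefore nested compact sets, and $V(z(j))\to V^\ast\ge0$. To show $V^\ast=0$, I work on a window of length $(\ell-1)B$. Fix any linear functional $c$ and let $m(j)=\max_i c^\top z_i(j)$. Agents attaining the maximum can keep it only if all their neighbors also attain it; otherwise RI strictness pushes them strictly inside. Propagating along the spanning tree of each union graph $\mathcal G[j,j+B-1]$, after $(\ell-1)B$ steps every agent is strictly below the old maximum in every direction in which disagreement existed, so $H$ shrinks strictly. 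The main obstacle is uniformity. Strict interiority gives no quantitative margin, so I cannot directly get $V(z(j+(\ell-1)B))\le\lambda V(z(j))$ with $\lambda<1$. I will first try Moreau's route: $\eta$ enters only through the compactness of $\{(j,\text{graph})\}$, and the set of graph sequences over a window is finite. I will also treat the update over one window as a set-valued map and argue by contradiction with a limiting trajectory on the limiting hull, which would have $V\equiv V^\ast>0$ while violating strictness. This requires some continuity or closedness of the update law, which I would need to state as implicit in Assumption~\ref{ass:asymp_moreau_A1} (as in Moreau's setting). With that, $V^\ast=0$ follows, which is Definition~\ref{def:asymp_consensus}.
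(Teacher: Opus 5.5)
\textbf{Necessity.} There is a genuine gap, and it cannot be closed: as written, all three claims of the theorem are false, and your proposal stalls exactly where the counterexamples sit. For (N1) you assume that leaving $\mathcal{C}_i(j)$, or even $H(z(j))$, creates lasting disagreement. Nothing stops the other agents from moving the same way. On a complete graph, the local law $z_i(j+1)=\bar z_i(j)+c$ with $c\neq 0$ reaches agreement after one step from every initial condition. From then on it violates hull inclusion at every step, because each $\mathcal{C}_i(j)$ is a singleton and every agent moves by $c$. No adversarial iteration helps, and $\mathcal{G}(\cdot)$ is fixed data, so you may not choose it adversarially anyway. The paper's separating-hyperplane argument has the same hole: a one-step increase of the support function in direction $v$ is compatible with $V(z(j))\to 0$. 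For (N2) you rely on (N1). The step you leave open, turning separation on one finite window into non-convergence with a single choice of initial data, is where it fails. Take the barycentric law $z_i(j+1)=\bar z_i(j)$ with a complete graph at $j=2^k$ and no edges otherwise. Assumption~\ref{ass:asymp_moreau_A1} holds and agreement is reached by $j=2$ from any initial state, yet Assumption~\ref{ass:joint_tree} fails for every $B$. Moreau's converse concerns \emph{uniform} attractivity, which is not assumed here, so citing it does not settle this. The paper's own proof also only exhibits separation over a single window.

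\textbf{Sufficiency.} You correctly identify uniformity as the real obstacle, but nothing in Assumption~\ref{ass:asymp_moreau_A1} supplies it, and without it the claim is false. Take $\ell=2$, $d=1$, a complete graph (so $B=1$), $z_1(0)<z_2(0)$, and $z_1(j+1)=z_1(j)+\epsilon_j(z_2(j)-z_1(j))$, $z_2(j+1)=z_2(j)-\epsilon_j(z_2(j)-z_1(j))$ with $\epsilon_j=4^{-(j+2)}$. Both updates lie in the open segment $\ri(\mathcal{C}_i(j))$, yet $V(z(j))=\prod_{k<j}(1-2\epsilon_k)\,V(z(0))\ge\tfrac56 V(z(0))$. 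This law is linear in $z$ for each $j$, so continuity is not the missing ingredient. Moreau's limiting-trajectory argument needs three things: the update depends on $j$ only through $\mathcal{G}(j)$, which ranges over a finite set; it depends continuously on $z$; and strictness holds for these maps at every configuration, not only along the given trajectory, so that the limit trajectory inherits it.

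With that hypothesis added, your route is the right one, and it is sounder than the paper's. The paper's Step~3 claims that strict containment of hulls forces a strict drop in diameter, which is false: move the vertex opposite the longest side of a triangle inward. Its Step~4 claims that a strictly decreasing, bounded-below sequence cannot have a positive limit, which is also false.

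Your propagation step needs one repair. On a directed graph, the root of the spanning tree may have no in-neighbors and sit at $\max_i c^\top z_i$ forever, so ``every agent strictly below the old maximum in every direction'' is false. Work with $c$ and $-c$ together. When the width $\max_i c^\top z_i-\min_i c^\top z_i$ is positive, each window's root lies strictly inside at least one of the two extremes. So in every window, either the set of agents strictly below the initial maximum or the set strictly above the initial minimum grows. Hence the width drops within $2(\ell-1)B$ steps. Evaluating at a direction that attains $V$ at the later time then gives a strict decrease of $V$ over each such block.
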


\begin{remark}[Counterparts in \cite{Moreau.2005}]\label{rem:moreau_counterparts}
Assumption~\ref{ass:asymp_moreau_A1} is the point-valued specialization of Moreau's convexity and strictness
assumptions (see \cite[Assumption~1]{Moreau.2005}), where our hull inclusion \eqref{eq:A1a_eps_hull} corresponds
to his convexity property and our RI strictness \eqref{eq:A1b_ri_strictness} corresponds to his
strict inclusion property (expressed as RI membership when disagreement is nonzero).
The \textbf{sufficiency} part of Theorem~\ref{thm:moreau_nes_suf} is the directed-graph asymptotic-consensus result
in \cite[Theorem~2]{Moreau.2005}, restated in our notation. 
The \textbf{necessity} part stated here is a standard locality-based necessity:
without (N1) the global hull may expand, and without (N2) the network can be split into groups without mutual
influence, so agreement for all initial conditions is impossible.
\end{remark}

\begin{proof}[Proof of Theorem~\ref{thm:moreau_nes_suf}]
We prove necessity and sufficiency separately.

\paragraph{Necessity of (N1)}
Assume asymptotic consensus holds for all initial conditions and the update law is local.
Suppose by contradiction that \eqref{eq:A1a_eps_hull} fails for some $i$ and $j$, i.e.
$z_i(j+1)\notin \Cij$ for some admissible configuration $z(j)$.
Since $\Cij$ is convex and closed, by the separating-hyperplane theorem there exists a vector
$v\in\mathbb S^{d-1}$ such that
\[
\langle v, z_i(j+1)\rangle \;>\; \sup_{x\in \Cij} \langle v,x\rangle.
\]
Because $\Cij\subseteq H(z(j)):=\co\{z_1(j),\dots,z_\ell(j)\}$, the right-hand side is at most
$\sup_{x\in H(z(j))}\langle v,x\rangle = \max_k \langle v,z_k(j)\rangle$. Hence
\[
\max_k \langle v,z_k(j+1)\rangle \;\ge\; \langle v,z_i(j+1)\rangle
\;>\; \max_k \langle v,z_k(j)\rangle.
\]
Thus the support function of the global hull strictly increases in direction $v$, meaning the global hull
expands in that direction. By locality one can fix all agents outside $\{i\}\cup\mathcal N_i(j)$ so that this
expansion cannot be compensated by other agents at time $j+1$. This contradicts asymptotic consensus for all
initial conditions. Therefore \eqref{eq:A1a_eps_hull} is necessary.

\paragraph{Necessity of (N2)}
Assume by contradiction that Assumption~\ref{ass:joint_tree} fails. Then for every $B\in\mathbb N$ there exists
a time $j_B$ such that the union graph $\mathcal G[j_B,j_B+B-1]$ has no directed spanning tree. In particular,
there exists a nontrivial partition of the node set $V=S\cup T$ with $S,T\neq\emptyset$ such that no node in $T$
is reachable from $S$ over this union window. Choose initial conditions at time $j_B$ as two distinct constants:
$z_i(j_B)=a$ for $i\in S$, $z_i(j_B)=b$ for $i\in T$ with $a\neq b$. By locality, nodes in $T$ depend only on
states within their in-neighborhoods, and because $S$ cannot reach $T$ over the union window, the evolution of
$z_T$ over $[j_B,j_B+B]$ is independent of $z_S$. Consequently, one can select (or equivalently, construct) a
trajectory consistent with locality in which the two groups remain separated, contradicting consensus for all
initial conditions. Thus joint spanning-tree connectivity is necessary.

\paragraph{Sufficiency}
Assume Assumptions~\ref{ass:asymp_moreau_A1} and \ref{ass:joint_tree}.
Recal the global hull $H(z(j))$ as in~\eqref{eq:H_def_moreau} and its diameter $V(z(j))$ as in~\eqref{eq:V_diameter}.

\emph{Step 1 (nested hulls).} From \eqref{eq:A1a_eps_hull}, for every $i$ we have
$z_i(j+1)\in \Cij\subseteq H(z(j))$, hence $H(z(j+1))\subseteq H(z(j+1))$ and therefore $V(z(j+1))\le V(z(j))$.
Thus $V(z(j))$ is nonincreasing and converges to some limit $\limsup_{j \to \infty} \geq 0$.

\emph{Step 2 (leaving vertices under strictness).} Fix $j$ with $V(z(j))>0$.
Let $p$ be a vertex (extreme point) of $H(z(j))$ and let $I_p(j):=\{i: z_i(j)=p\}$ be the set of agents at that
vertex. Consider any $i\in I_p(j)$.
If all neighbor values equal $p$, then $\Cij=\{p\}$ and $z_i(j+1)=p$.
Otherwise $\Cij$ contains at least one point different from $p$, hence $p\in \partial \Cij$
and therefore $p\notin \ri(\Cij)$. By \eqref{eq:A1b_ri_strictness} we obtain
$z_i(j+1)\in \ri(\Cij)$, in particular $z_i(j+1)\neq p$. Thus an agent can stay at a vertex $p$ only
if all its current neighbor values equal $p$.

\emph{Step 3 (strict hull contraction over a connectivity window).}
Let $B$ be the window length from Assumption~\ref{ass:joint_tree}. Consider the union graph
$\mathcal G[j,j+B-1]$ which contains a directed spanning tree. Since $V(z(j))>0$, not all agents lie at the same
point. Hence there exists at least one vertex $p$ of $H(z(j))$ such that $I_p(j)\neq V$.
By the spanning-tree property, information from outside $I_p(\cdot)$ must enter $I_p(\cdot)$ somewhere over the
window; otherwise $I_p(\cdot)$ would be closed under in-neighbors across the union graph, preventing reachability
of all nodes. Hence there exists a time $\tau\in\{j,\dots,j+B-1\}$ and an edge $(k,i)\in \mathcal E(\tau)$ with
$i\in I_p(\tau)$ and $k\notin I_p(\tau)$. Then $i$ has a neighbor with value $\neq p$ and by Step 2 it must leave
$p$ at time $\tau+1$. Consequently $|I_p(\tau+1)|<|I_p(\tau)|$.

Repeating this argument over successive windows, one shows that if $V(z(j))>0$ then within at most $\ell B$ steps
(at most $\ell$ possible decrements of vertex-occupancy), at least one vertex of $H(z(j))$ loses all agents and
cannot remain an extreme point of the later hull. Therefore there exists $\hat B:=\ell B$ such that
\[
H(z(j+\hat B))\subsetneq H(z(j))\qquad\text{whenever }V(z(j))>0.
\]
Strict containment of compact convex sets implies strict decrease of the diameter, hence
$V(z(j+\hat B))<V(z(j))$ whenever $V(z(j))>0$.

\emph{Step 4 (conclude $\limsup_{j \to \infty} V(z(j))=0$).}
If $\limsup_{j \to \infty} V(z(j))>0$, then along the subsequence $j_m:=m\hat B$ we would have a strictly decreasing bounded-below
sequence $V(z(j_m))$, contradicting convergence to a positive limit. Hence $\lim_{j\to\infty}V(z(j))=0$.
This is asymptotic consensus by Definition~\ref{def:asymp_consensus}.
\end{proof}
\section{Consensus via Distributed Multi-Step MPC}
\label{subsec:dmpc}

Here we represent the local optimization problem whose solution is the optimal control sequence which is applied by each local MPC controller.
For each agent $i$, at outer time $j$ we assign a local optimization control problem (OCP) as follows.

\begin{problem} \label{prob:local-OCP}
Consider a network of agents~\eqref{eq:agent_dyn} with an associated directed graph $(\mathcal{V},\mathcal{E}(j))$.
Assume that there exists an \afcl $q_i$ as in Definition~\ref{def:afcl}.
Assume that there exists a finite-step Lyapunov function $V$ associated with the decaying function $\alpha \in \Kinf$, $\alpha < \id$. Let positive definite matrices $Q$ $R$, and $\varepsilon \geq 0$ be given.
Also, at any $t = jM \in \Zp$ assume that the state $z(j) := x(jM)$ is known and the local barycenter $\bar{z}_i (j)$ as in~\eqref{eq:barycenter} is available to the agent $i$.
Compute the input sequence $u_i(0{:}M-1)$, which are the decision variables, and predicted state sequence $x_i(0{:}M)$ for the following optimization control problem (OCP), for each local agent $i = 1,\dots,\ell$.
\begin{equation}\label{eq:OCP}
\begin{aligned}
\min_{u_i(0{:}M-1)}\quad &
J_i := \sum_{k=0}^{M-1}\Big(\|x_i(k)-\bar z_i(j)\|_Q^2 + \|u_i(k)-u_i(k-1)\|_{R}^2\Big)\\
\text{s.t.}\quad
& x_i(0)=z_i(j),\\
& x_i(k+1)=g_i(x_i(k),u_i(k)),\qquad k=0,\dots,M-1,\\
& x_i(k)\in \X_i,\ u_i(k)\in \U_i,\qquad k=0,\dots,M-1,\\
& x_i(M)\in ,\\
& x_i(M)\in \X_i \cap \mathcal{C}_i (j) \\
& \| x_i(M)-\bar z_i(j) \| \leq \alpha (\| x_i(0)-\bar z_i(j) \|).
\end{aligned}
\end{equation} 
\end{problem}

The barycenter-tracking term is a \emph{local} stage objective encouraging movement toward local agreement.
Motivated by the sufficiency condition of the approximate strictness~\eqref{eq:A1b_ri_strictness}, the optimization problem aims to minimize the distance to the barycenter.
Motivated by the necessity of the convex hull inclusion~\eqref{eq:A1a_eps_hull}, this condition is encoded as the terminal constraint in the local OCP.
The feasibility of the inequality condition $\| x_i(M)-\bar z_i(j) \| \leq \alpha (\| x_i(0)-\bar z_i(j) \|)$ in~\eqref{eq:OCP} is ensured by Proposition~\ref{prop:fsCLF} thanks to the assumption on the exitence of \afcls $q_i$'s.

The distributed multi-step MPC algorithm is given as follows.
\begin{algorithm}[ht]
\caption{Distributed MPC routine}
\label{alg:dmproutine}
\begin{algorithmic}[1]
\State Fix $M\in\mathbb{N}$, $\varepsilon\ge 0$. Initialize $x_i(0)\in \X_i$.
\For{$j=0,1,2,\dots$}
  \State Each agent $i$ transmits $z_i(j)=x_i(jM)$ to neighbors and receives $\{z_k(j):k\in\mathcal{N}_i(j)\}$.
  \State Agent $i$ constructs $\mathcal{C}_i(j)$ and $\bar z_i(j)$.
  \State Agent $i$ solves the local OCP \eqref{eq:OCP} and obtains $u_i^\star(0{:}M-1)$.
  \State Apply $u_i(t)=u_i^\star(t-jM)$ for inner times $t=jM,\dots,jM+M-1$ (open loop).
\EndFor
\end{algorithmic}
\end{algorithm}
\subsection{Consensus under OCP \eqref{eq:OCP} via enforced finite-step barycenter contraction}
\label{subsec:asymp_consensus_ocp14}

The following result is a direct consequence of the necessary condition (N1) in Theorem~\ref{thm:moreau_nes_suf}.
This can be seen as a restatement of~\cite[Lemma~1]{Moreau.2005}.

\begin{assumption}[hull condition]\label{ass:eps_hull}
For all $i,j$,
\begin{equation}\label{eq:eps_hull}
\dist\big(z_i(j+1),\mathcal{C}_i(j)\big)\le 0,
\quad\text{equivalently}\quad z_i(j+1)\in \mathcal{C}_i(j) .
\end{equation}
\end{assumption}
The convexity inclusion in Assumption~\ref{ass:eps_hull} is guaranteed by the terminal constraint of each local OCP.
\begin{lemma}[One-step hull bound]\label{lem:one_step}
Under Assumption~\ref{ass:eps_hull},
\begin{equation}\label{eq:one_step_V}
V(z(j+1))\le V(z(j)) ,\qquad \forall j  \geq 0.
\end{equation}
\end{lemma}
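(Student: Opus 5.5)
The plan is to show that the global hull cannot grow from one outer step to the next, $H(z(j+1))\subseteq H(z(j))$. Since the diameter of a set is monotone under inclusion, this gives \eqref{eq:one_step_V} at once. Throughout we use the representation $V(z)=\operatorname{diam}(H(z))=\max_{p,q}\|z_p-z_q\|$ from \eqref{eq:V_diameter}.

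\textbf{Step 1.} We first show that each local hull lies in the global hull, $\mathcal{C}_i(j)\subseteq H(z(j))$. By \eqref{eq:Ci_def_moreau}, $\mathcal{C}_i(j)$ is the convex hull of $\{z_k(j):k\in\{i\}\cup\mathcal{N}_i(j)\}$. This is a subset of $\{z_1(j),\dots,z_\ell(j)\}$, and taking convex hulls preserves inclusion. Combining this with Assumption~\ref{ass:eps_hull}, we get $z_i(j+1)\in\mathcal{C}_i(j)\subseteq H(z(j))$ for every $i\in\mathcal{V}$.

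\textbf{Step 2.} All points $z_1(j+1),\dots,z_\ell(j+1)$ therefore lie in the convex set $H(z(j))$. The convex hull of these points is the smallest convex set containing them. Hence $H(z(j+1))\subseteq H(z(j))$.

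\textbf{Step 3.} We conclude with the elementary monotonicity of the diameter. Alternatively, we can argue directly from \eqref{eq:V_diameter}. Fix $p,q$ and write $z_p(j+1)=\sum_k\lambda_k z_k(j)$ and $z_q(j+1)=\sum_m\mu_m z_m(j)$ as convex combinations. Then
\[
\|z_p(j+1)-z_q(j+1)\|=\Big\|\sum_{k,m}\lambda_k\mu_m\big(z_k(j)-z_m(j)\big)\Big\|\le\sum_{k,m}\lambda_k\mu_m\|z_k(j)-z_m(j)\|\le V(z(j)).
\]
Maximizing over $p,q$ yields \eqref{eq:one_step_V}.

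There is no genuine obstacle here; the only care needed is in the bookkeeping of Step 3. One must use the double convex-combination identity, i.e.\ $\sum_m\mu_m=\sum_k\lambda_k=1$, so that the difference can be written as a convex combination of pairwise differences. This is what makes the bound sharp, rather than merely $\le 2V$.
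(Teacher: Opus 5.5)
Your proof is correct and follows the paper's argument. Both show that each $\mathcal{C}_i(j)\subseteq H(z(j))$, so by Assumption~\ref{ass:eps_hull} every $z_i(j+1)\in H(z(j))$; this gives $H(z(j+1))\subseteq H(z(j))$, and the diameter bound follows. Your alternative Step~3, the double convex-combination estimate, is a valid direct check of the diameter monotonicity that the paper simply takes for granted.
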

\begin{proof}
Fix $j\ge0$ and recall $H(z(j)):=\co\{z_1(j),\dots,z_\ell(j)\}$.
For each $i$, we have by Assumption~4 that $z_i(j{+}1)\in \Cij$.
By definition of $\Cij$ in \eqref{eq:Ci_def_moreau}, $\Cij\subseteq H(z(j))$ because it is the convex hull of a subset
of $\{z_1(j),\dots,z_\ell(j)\}$.
Hence $z_i(j+1)\in H(z(j))$ for all $i$, and therefore
\[
H(z(j+1))=\co\{z_1(j{+}1),\dots,z_\ell(j{+}1)\}\subseteq H(z(j)).
\]
Taking diameters gives $\diam(H(z(j+1)))\le \diam(H(z(j)))$, i.e. $V(z(j+1))\le V(z(j))$.
\end{proof}

\begin{remark}\label{rem:need_for_strictness}
By Lemma~\ref{lem:one_step} we make sure that the convex hull $\Cij$ is invariant, the agents do not leave this set.
However, to show asymptotic consensus as in Definition~\ref{def:asymp_consensus} each agent shall expose a stronger property, namely lying in a relative interior (RI) of the convex hull $\Cij$, see Assumption~\ref{eq:A1b_ri_strictness}.
We note that the OCP~\eqref{eq:OCP} has no mechanism to enforce agents to satisfy the RI strictness~\ref{eq:A1b_ri_strictness}.
In fact, an agent could repeatedly move along a face or vertex of its neighbor hull, satisfying the contraction inequality, without ever entering the RI of the hull, while the convex-hull inclusion $z_i(j+1)=x_i(M)\in \Cij$ and the finite-step contraction toward the barycenter $\|z_i(j+1)-\bar z_i(j)\|\le \alpha(\|z_i(j)-\bar z_i(j)\|)$ are satisfied.
\end{remark}

\section{Impossibility of consensus under the current OCP formulation}\label{sec:impossibility}

This section shows that, under the current OCP formulation~\eqref{eq:OCP}, asymptotic consensus
cannot be guaranteed without an additional strictness assumption which excludes boundary-invariant trajectories. 
In particular, the RI strictness~\eqref{eq:A1b_ri_strictness} can be concluded from the current setting of OCP~\eqref{eq:OCP}.

\begin{proposition}[Impossibility of concluding RI strictness (10) from under OCP \eqref{eq:OCP}]\label{prop:impossibility}
Consider the closed-loop multi-agent system generated by Algorithm~\ref{alg:dmproutine} based on OCP~\eqref{eq:OCP}. Assume:
\begin{enumerate}[(i)]
\item OCP \eqref{eq:OCP} is feasible for all agents $i \in \V$ and all times $j\ge 0$; in particular, the terminal convex-hull constraint
$
z_i(j+1)\in \Cij
$ 
and the terminal finite-step inequality
$
\|z_i(j+1)-\bar z_i(j)\|\le \alpha(\|z_i(j)-\bar z_i(j)\|)$, $\alpha < \id$ hold for all $i \in \V,j \geq 0$;
\item the communication graph satisfies Assumption~\ref{ass:joint_tree} (joint spanning-tree connectivity).
\end{enumerate}
Then OCP~\eqref{eq:OCP} does \emph{not} enforce Assumption~\ref{ass:asymp_moreau_A1}(ii)~\eqref{eq:A1b_ri_strictness}.
More precisely, there exist a finite network, a jointly connected graph,
and explicit agent states such that:
\begin{enumerate}[(a)]
\item for some time $j$ and some agent $i$, the local set $\Cij$ is non-singleton,
\item OCP~\eqref{eq:OCP} for that agent admits a feasible solution whose implemented terminal state
satisfies $z_i(j+1)\in \rb\Cij$,
\item hence, one cannot deduce
\(
C_i(j)\ \text{non-singleton} \Rightarrow z_i(j+1)\in\ri(C_i(j))
\)
from OCP~(11).
\end{enumerate}
\end{proposition}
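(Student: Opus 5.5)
The proof will be by explicit construction of a counterexample. The plan is to choose the simplest data for which every hypothesis (i)--(ii) holds, and then exhibit a feasible OCP solution whose terminal state lies on the relative boundary of $\Cij$.

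\textbf{Setup.} I take $\ell=2$ scalar agents ($d=1$) with single-integrator dynamics $g_i(x,u)=x+u$. I set $\X_i=\R$, $\U_i=[-\bar u,\bar u]$ and $M=1$, and use the complete bidirectional graph $\mathcal N_1(j)=\{2\}$, $\mathcal N_2(j)=\{1\}$ for all $j$. This graph trivially satisfies Assumption~\ref{ass:joint_tree} with $B=1$.

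\textbf{Contraction function and admissible law.} I pick $\alpha(s)=s/2$, which is in $\Kinf$ and satisfies $\alpha<\id$. An \afcl then exists, for instance $q_i(x,\bar z)=\mathrm{sat}(\bar z-x)$, so the standing assumptions of Problem~\ref{prob:local-OCP} are met.

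\textbf{Feasibility of the boundary point.} Fix $z_1(j)=0$ and $z_2(j)=1$, with $\bar u\ge 1$. Then $\mathcal C_1(j)=[0,1]$, which is non-singleton, so (a) holds. Its relative boundary is $\{0,1\}$, and $\bar z_1(j)=1/2$. I consider the terminal state $x_1(1)=1$, obtained with $u_1(0)=1$. It lies in $\X_1\cap\mathcal C_1(j)$. It also satisfies the contraction inequality, since
\[
|1-1/2| = 1/2 \le \alpha(|0-1/2|)=1/4
\]
is false. So $\alpha=s/2$ is too strong here, and I instead take $\alpha(s)=s$ as a limit case. Because strict $\alpha<\id$ is required, I will instead choose $\alpha(s)=\gamma s$ with $\gamma\in(0,1)$ and place the boundary point relative to the barycenter appropriately.

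\textbf{Main obstacle and its resolution.} The real difficulty is that in one dimension with two agents, both boundary points sit at distance $\tfrac12\,\mathrm{diam}$ from the barycenter, so strict contraction can rule them out. To get around this, I will use three agents in $\R$, or more generally an agent $i$ whose neighbor hull has a boundary point closer to the barycenter than $z_i(j)$ is. Concretely, take $\ell=3$ with $z_1=0$, $z_2=0.9$, $z_3=1$, and let agent $1$ have neighbors $\{2,3\}$. Then $\bar z_1=19/30\approx0.633$ and $|z_1-\bar z_1|\approx0.633$. The boundary point $1\in\rb\mathcal C_1(j)$ gives $|1-\bar z_1|\approx0.367\le \gamma\cdot 0.633$ for any $\gamma\ge 0.58$, for example $\gamma=0.6$. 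Choosing $u_1(0)=1$ with $\bar u\ge1$ is then feasible, which gives (b).

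\textbf{Remaining checks.} I will verify that feasibility for all agents and all $j$ holds. For each $j$, the candidate $q_i$ (a move to $\bar z_i$) is feasible because $\bar z_i\in\ri\mathcal C_i$ and the contraction holds trivially. Joint connectivity is ensured by fixing a complete graph.

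\textbf{Conclusion.} Since (b) exhibits a feasible solution with terminal state on the relative boundary, no implication ``$\Cij$ non-singleton $\Rightarrow z_i(j+1)\in\ri\Cij$'' can follow from the OCP constraints alone, which is (c). If the primary cost should also be respected, I will remark that the proposition only asks for a feasible solution. Optionally, I can tune $Q,R$ (e.g.\ large $R$ penalizing deviation from a previous input $u_1(-1)=1$) so that this boundary solution is even optimal.
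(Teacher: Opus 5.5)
Your final construction is correct and follows the same route as the paper. Both use an explicit single-integrator counterexample with $M=1$ and a complete graph, where agent~1's terminal state lies on the boundary of its neighbor hull yet still meets the $\alpha$-contraction. The paper uses four agents at the corners of the unit square, the edge midpoint $(0,0.5)$ and $\alpha(r)=0.9r$, whereas you use three scalar agents at $0,\,0.9,\,1$ so that the barycenter is skewed toward the endpoint $1\in\partial\mathcal C_1(j)$.

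Your optional choice $u_1(-1)=1$ is a real improvement. With $M=1$ the state term of the cost is fixed, so this makes the boundary point the unique primary optimizer, and hence the state the algorithm actually implements. In the write-up, delete the abandoned two-agent attempt, since its claimed contraction inequality is false, as you note yourself.
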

\begin{remark}
A direct consequence of Proposition~\ref{prop:impossibility} is that asymptotic consensus cannot be \emph{guaranteed} from OCP~\eqref{eq:OCP} and connectivity
alone, since the RI strictness condition required by Theorem~\ref{thm:moreau_nes_suf} is not
implied by OCP~\eqref{eq:OCP}.
We note that as stated by Theorem~\ref{thm:moreau_nes_suf} the RI strictness is not necessary as a property of the update map,
but, as seen from Proposition~\ref{prop:impossibility}, some form of strictness is necessary as a property of the closed-loop trajectories to exclude trapping in a boundary invariance.
\end{remark}
\begin{proof}[Proof of Proposition~\ref{prop:impossibility}]
Consider $\ell=4$ agents in $\R^2$ with samples at some time $j=0$ given by
\[
z_1(0)=(0,0),\quad z_2(0)=(0,1),\quad z_3(0)=(1,0),\quad z_4(0)=(1,1).
\]
Let the communication graph be time-invariant and complete:
\[
\Ni(0)=\{1,2,3,4\}\setminus\{i\}\qquad\forall i\in\{1,2,3,4\}.
\]
This graph is strongly connected and satisfies any joint connectivity condition trivially.

For agent $i=1$ we then have
\[
\C_1(0)=\co\{z_1(0),z_2(0),z_3(0),z_4(0)\}=[0,1]\times[0,1],
\]
which is non-singleton and full-dimensional.
Moreover, the barycenter for agent $1$ is
\[
\bar z_1(0)=\frac{1}{4}\bigl(z_1(0)+z_2(0)+z_3(0)+z_4(0)\bigr)=(0.5,0.5)\in \ri(\C_1(0)).
\]
Take $\alpha(r)=0.9r$.
We now show there exists a feasible terminal point \emph{on the boundary} of $\C_1(0)$ satisfying the terminal inequality.

Let agent~1 implement the terminal state
\[
z_1(1):=(0,0.5).
\]
Then $z_1(1)\in\partial([0,1]^2)$ (it lies on the left edge), hence $z_1(1)\notin \ri(\C_1(0))$.
Compute the distances to the barycenter:
\[
\|z_1(0)-\bar z_1(0)\|=\|(0,0)-(0.5,0.5)\|=\sqrt{0.5},
\]
\[
\|z_1(1)-\bar z_1(0)\|=\|(0,0.5)-(0.5,0.5)\|=0.5.
\]
Hence the terminal inequality holds:
\[
\|z_1(1)-\bar z_1(0)\| = 0.5 \le 0.9\sqrt{0.5}=\alpha(\|z_1(0)-\bar z_1(0)\|),
\]
since $0.9\sqrt{0.5}\approx 0.6364$.

It remains only to note that this choice is compatible with OCP~\eqref{eq:OCP}'s dynamics and input constraints.
For instance, take a single-integrator prediction model for agent~1 over horizon
$M=1$,
\[
x_1(1)=x_1(0)+u_1(0),
\]
with admissible inputs $u_1(0)\in \U_1:=[-1,1]^2$.
Then $x_1(0)=z_1(0)=(0,0)$ and the input $u_1(0)=(0,0.5)$ yields $x_1(1)=(0,0.5)=z_1(1)$,
so the terminal state $z_1(1)$ is dynamically feasible.
Therefore OCP~\eqref{eq:OCP} admits a feasible solution for agent~1 whose implemented terminal
state satisfies
$z_1(1)\in\partial \C_1(0)$ while $\C_1(0)$ is non-singleton.
Hence, Condition~\eqref{eq:A1b_ri_strictness} is not implied by OCP~\eqref{eq:OCP}, which proves the claim.
\end{proof}
\section{DMPC with Lexicographic Selection}\label{sec:OCP_with_lex_sel}
The terminal constraints in OCP~\eqref{eq:OCP} guarantee the hull inclusion \eqref{eq:eps_hull}, but do not,
in general, force $x_i(M)$ to lie in $\ri(\Cij)$ whenever disagreement is
nonzero. 
Indeed, boundary points of $\Cij$ may remain feasible and even optimal.

In this section, we propose a \emph{minimal} addition to OCP~\eqref{eq:OCP} without changing its setting, hence, without touching the feasibility set.
To separate feasibility from implementation, we introduce the set of terminal points that can actually be realized under OCP~\eqref{eq:OCP}.
Then we just introduce a \emph{lexicographic selection} rule that prefers interior points when the optimizer is non-unique.

Fix an outer time $j\in\mathbb N_0$ and agent $i\in V$.
Let $x_i^\star(0{:}M-1)$ denote the state trajectories of agents~\eqref{eq:agent_dyn} under optimizers $u_i^\star(0{:}M-1)$ of OCP~\eqref{eq:OCP}.

The set of feasible terminal points under OCP~\eqref{eq:OCP} is defined as follows.

\begin{definition}[Feasible terminal set]\label{def:Fi}
For each $i$ and $j$, define the \emph{feasible terminal set} induced by OCP~\eqref{eq:OCP} as
\[
F_i(j) \;:=\;\Big\{\, \xi\in\mathbb R^d \ \Big|\ \exists u_i^\star(0{:}M-1) \,\,\,\mathrm{ s.t. }\,\,\, x_i^\star(M)=\xi \,\Big\}.
\]
\end{definition}
We note that, by construction, $F_i(j)\subseteq \Cij$.

\subsection{When OCP~\eqref{eq:OCP} does not automatically yield RI strictness}
\label{subsec:ri_selection}
The question ``what if OCP finds no solution such that $z_i(j{+}1)\in\ri(\Cij)$?'' splits into
three logically distinct cases.

\paragraph{Three possibilities.}
Let $\C_i(j)$ be the local hull at time $j$.
\begin{enumerate}
\item \textbf{Case A (singleton hull).} If $\Cij=\{p\}$, then $\ri(\Cij)=\{p\}$ and RI
strictness is automatic.
This occurs exactly when all points in
$\{z_i(j)\}\cup\{z_k(j):k\in N_i(j)\}$ coincide.\\
\emph{How to handle it:} no action needed; the update is already consistent with consensus.
\item \textbf{Case B ($\ri(\Cij)\neq\emptyset$ but \emph{not} chosen by primary optimum).} 
This is the typical situation with linear/convex objectives: the set of optimal terminal points
may lie on a face of $\Cij$. In this case, there \emph{exist} feasible interior points
(e.g.\ $\ri(\Cij)$ is nonempty whenever $\Cij$ has more than one point), but the primary objective
does not select them.\\
\emph{How to handle it:} use a \emph{lexicographic} (two-stage) selection rule that \emph{preserves} the feasible set and the primary optimum value, but selects an interior point
whenever possible. 
This is formalized below, see Subsection~\ref{subsec:lexicographic-selc}.
\item \textbf{Case C ($F_i(j)\cap \ri(\Cij)=\emptyset$)} 
In this case, consensus may still occur in special instances, but cannot be guaranteed in general from OCP~\eqref{eq:OCP} alone.\\
\emph{How to handle it:}
We divide this case into two sub-cases:
\begin{itemize}
\item If Case C occurs only finitely many times (or not on the ``extreme'' agents responsible for the
global hull diameter), consensus may still be provable under additional \emph{trajectory} conditions.
However, such conditions are extra assumptions and generally hard to verify from OCP~\eqref{eq:OCP}.
\item If Case C can occur infinitely often under persistent disagreement, then no RI strictness
argument can be guaranteed.
To rule it out, one must modify the formulation (e.g.\ soften constraints, change
terminal condition, or add an explicit interiority-promoting mechanism).
Any such modification changes either the feasible set or the implemented selection, and thus goes beyond OCP~\eqref{eq:OCP} as stated.
\end{itemize} 
\end{enumerate}

\subsection{Lexicographic (two-stage) selection among optimal solutions.}\label{subsec:lexicographic-selc}

We now formalize the weakest modification that does \emph{not} shrink the feasible set of OCP~\eqref{eq:OCP}:
keep all constraints and the primary objective unchanged, but define a secondary selection among primary optimizers.

\begin{assumption}[RI existence on feasible terminal set]\label{ass:RIexist_ocp}
Whenever $\Cij$ is not a singleton, the feasible terminal set intersects its RI:
\begin{equation}\label{eq:RIexist_ocp}
F_i(j)\cap \ri(\Cij)\neq\emptyset.
\end{equation}
\end{assumption}

\begin{remark}[Assumption~\ref{ass:RIexist_ocp} is design-neutral and minimal]\label{rem:design_neutral}
Assumption~\ref{ass:RIexist_ocp} is \emph{necessary} for any implementation that aims to realize
$x_i(M)\in\ri(\Cij)$ while keeping the constraints of OCP~\eqref{eq:OCP} unchanged:
if $F_i(j)\cap \ri(\Cij)=\emptyset$, then RI strictness is impossible at that $(i,j)$ without
changing the constraint set. The assumption does not impose margins, weights, or quantitative strictness; it
only asserts existence of at least one feasible RI terminal point.
\end{remark}

Let $J_i$ be the primary cost in OCP~\eqref{eq:OCP}.
At each $(i,j)$:
\begin{enumerate}
\item Solve OCP~\eqref{eq:OCP} to obtain the optimal value $J_i^\star(j)$.
\item Among all feasible solutions with $J_i\le J_i^\star(j)$, select $x_i(M)$ that maximizes an
interiority measure, e.g., see~\eqref{eq:interiority-measure}.
\end{enumerate}
A convenient interiority measure that can be stated abstractly (without committing to a particular
polytope representation) is
\begin{equation}\label{eq:interiority-measure}
    \phi_{i,j}(\xi) := \dist(\xi,\partial \Cij).
\end{equation}
Then the secondary problem is
\begin{equation}\label{eq:secondary}
\max_{\xi\in F_i(j)}\ \phi_{i,j}(\xi)\qquad \text{s.t.}\quad \exists u_i^\star(0{:}M-1) \Rightarrow 
x_i(M)=\xi \text{ and } J_i\le J_i^\star(j).
\end{equation}
Any maximizer $\xi^\dagger$ of \eqref{eq:secondary} is selected, and the implemented update is
$z_i(j{+}1)=\xi^\dagger$.
This preserves the original optimum value and only fixes a tie-breaking rule.

\begin{remark}[No feasibility shrinkage]\label{rem:no_feas_shrink_ocp}
The lexicographic selection rule does \emph{not} shrink the feasibility set of OCP~\eqref{eq:OCP} and does not
add any additional constraint. It only specifies which solution is implemented when multiple primary optimizers
exist.
\end{remark}

\begin{lemma}[Lexicographic selection yields RI strictness whenever feasible]\label{lem:lex_RI}
Assume $\Cij$ is not a singleton and Assumption~\ref{ass:RIexist_ocp} holds at $(i,j)$.
Then the lexicographic selection produces $z_i(j+1)=x_i(M)\in \ri(\Cij)$.
\end{lemma}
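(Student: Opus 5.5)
The plan is to reduce the claim to two facts: the secondary objective $\phi_{i,j}$ separates $\ri(\Cij)$ from $\partial\Cij$ on $\Cij$, and the secondary problem \eqref{eq:secondary} attains a strictly positive optimal value under Assumption~\ref{ass:RIexist_ocp}.

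\emph{Step 1 (interiority measure characterises the relative interior).} Since $\Cij$ is a polytope, it is closed in $\mathrm{aff}(\Cij)$, so its relative boundary $\partial\Cij$ is closed. Because $\Cij$ is not a singleton, $\mathrm{aff}(\Cij)$ has dimension at least one, and hence $\ri(\Cij)\neq\emptyset$ and $\partial\Cij\neq\emptyset$. For $\xi\in\Cij$ we then have $\phi_{i,j}(\xi)=\dist(\xi,\partial\Cij)=0$ if and only if $\xi\in\partial\Cij$. Since $\Cij=\ri(\Cij)\,\dot\cup\,\partial\Cij$, this gives
\[
\xi\in\Cij,\ \phi_{i,j}(\xi)>0 \iff \xi\in\ri(\Cij).
\]

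\emph{Step 2 (positive optimal value of the secondary problem).} I will read Definition~\ref{def:Fi} literally: $F_i(j)$ is the set of terminal states generated by \emph{optimizers} $u_i^\star$ of OCP~\eqref{eq:OCP}. Every $\xi\in F_i(j)$ is therefore admissible in \eqref{eq:secondary}, since it satisfies $J_i\le J_i^\star(j)$ with equality, and it satisfies $F_i(j)\subseteq\Cij$ by the terminal constraint. Assumption~\ref{ass:RIexist_ocp} supplies some $\xi_0\in F_i(j)\cap\ri(\Cij)$. By Step~1, $\phi_{i,j}(\xi_0)>0$, so the supremum $\phi^\dagger$ of \eqref{eq:secondary} satisfies $\phi^\dagger\ge\phi_{i,j}(\xi_0)>0$. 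Any maximizer $\xi^\dagger$ then has $\phi_{i,j}(\xi^\dagger)=\phi^\dagger>0$, and since $\xi^\dagger\in F_i(j)\subseteq\Cij$, Step~1 gives $z_i(j+1)=\xi^\dagger\in\ri(\Cij)$.

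\emph{Step 3 (existence of a maximizer), which I expect to be the main technical obstacle.} To avoid this issue I would argue in two ways.
\begin{enumerate}
\item If the selection rule is understood as implementing any $\xi$ with $\phi_{i,j}(\xi)>\tfrac12\phi^\dagger$, the conclusion follows from Step~2 without needing attainment.
\item For an actual maximizer, I would show that the set of optimal terminal points is compact. This is the continuous image, via the $M$-fold composition of $g_i$ (continuous by Assumption~\ref{ass:K-boundedness}), of the optimal input set. That set is closed because $J_i$ is continuous and the constraint sets $\X_i$, $\U_i$ and $\Cij$ are closed. It is bounded because, with $\U_i$ bounded (as in the integrator settings), it lies in $\U_i^M$, and $\K$-boundedness keeps the predicted states bounded. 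Since $\phi_{i,j}$ is $1$-Lipschitz, a maximizer exists. I would state closedness and boundedness of $\X_i$ and $\U_i$ explicitly as a standing regularity requirement.
\end{enumerate}

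The subtle point to flag is the definition of $F_i(j)$. If it were instead read as merely \emph{feasible} terminal points, Assumption~\ref{ass:RIexist_ocp} would not guarantee an interior point among the optimizers. In that case the argument would need the near-optimal relaxation $J_i\le J_i^\star(j)+\varepsilon$ with $\varepsilon>0$, together with an argument that an interior feasible point can be reached within cost $\varepsilon$. I will therefore state the lemma under the optimizer-based reading of Definition~\ref{def:Fi}.
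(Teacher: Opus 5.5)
Your proposal is correct and follows the paper's own proof: $\phi_{i,j}$ vanishes on $\partial\Cij$ and is positive on $\ri(\Cij)$, Assumption~\ref{ass:RIexist_ocp} supplies a primary-optimal terminal point in $\ri(\Cij)$ so the secondary optimum is strictly positive, and any maximizer lies in $F_i(j)\subseteq\Cij$ with $\phi_{i,j}>0$, hence in $\ri(\Cij)$. Your Step~3 and your optimizer-based reading of Definition~\ref{def:Fi} make explicit two points the paper's proof uses tacitly: the maximum in \eqref{eq:secondary} must be attained, and the interior point from Assumption~\ref{ass:RIexist_ocp} must itself be primary-optimal (the integrator corollaries only exhibit feasible interior points, which does not establish the latter).
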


\begin{proof}
Since $\Cij$ is compact and convex, $\partial \Cij$ is closed and the function
$\phi_{i,j}(\xi)=\dist(\xi,\partial \Cij)$ is continuous.
For any $\xi\in \partial \Cij$ we have $\phi_{i,j}(\xi)=0$, while for any
$\xi\in\ri(\Cij)$ we have $\phi_{i,j}(\xi)>0$.
By Assumption~\ref{ass:RIexist_ocp} there exists a feasible $\tilde\xi\in F_i(j)\cap\ri(\Cij)$.
Hence the maximum of $\phi_{i,j}$ over the feasible primary-optimal set is strictly positive, and any maximizer
$\xi^\dagger$ satisfies $\phi_{i,j}(\xi^\dagger)>0$, thus $\xi^\dagger\notin\partial \Cij$.
Since $\xi^\dagger\in F_i(j)\subseteq \Cij$, we conclude $\xi^\dagger\in \ri(\Cij)$.
\end{proof}

\subsection{Consensus theorem under OCP~\eqref{eq:OCP} with lexicographic RI selection}

\begin{theorem}[Consensus under OCP~\eqref{eq:OCP} with minimal tie-breaking]\label{thm:consensus_ocp_lex}
Consider the distributed multi-step MPC scheme based on OCP~\eqref{eq:OCP}. Assume:
\begin{enumerate}
\item \textbf{(Feasibility)} OCP~\eqref{eq:OCP} is feasible for all agents $i$ and all times $j\ge0$.
\item \textbf{(Joint connectivity)} Assumption~\ref{ass:joint_tree} holds.
\item \textbf{(RI existence)} Assumption~\ref{ass:RIexist_ocp} holds.
\item \textbf{(Implementation)} Each agent implements the lexicographic selection rule described above.
\end{enumerate}
Then the implemented closed-loop updates satisfy Assumption~\ref{ass:asymp_moreau_A1}, and consequently the
network achieves asymptotic consensus in the sense of Definition~\ref{def:asymp_consensus}.

\smallskip
\noindent\textbf{Necessity statement (design-neutral).}
Under locality of the implemented update law, the hull inclusion \eqref{eq:eps_hull} together with
Assumption~\ref{ass:joint_tree} is necessary for consensus for all initial conditions, as stated in the
necessity part of Theorem~\ref{thm:moreau_nes_suf}.
\end{theorem}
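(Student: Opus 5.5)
The plan is to verify the two items of Assumption~\ref{ass:asymp_moreau_A1} for the implemented updates and then invoke the sufficiency part of Theorem~\ref{thm:moreau_nes_suf}, using Assumption~\ref{ass:joint_tree} for the connectivity hypothesis. The necessity statement needs no new argument: it is the necessity part of Theorem~\ref{thm:moreau_nes_suf} applied to the implemented local update law. By Algorithm~\ref{alg:dmproutine} the implemented update is $z_i(j+1)=\xi^\dagger$, a maximizer of~\eqref{eq:secondary}.

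\emph{Hull inclusion.} Fix $(i,j)$. By assumption (1), OCP~\eqref{eq:OCP} is feasible, so $F_i(j)\neq\emptyset$. The selected point $\xi^\dagger$ is the terminal state of an admissible trajectory of~\eqref{eq:OCP}, so it satisfies the terminal constraint $x_i(M)\in\X_i\cap\Cij$. Hence $z_i(j+1)\in F_i(j)\subseteq\Cij$, which is~\eqref{eq:A1a_eps_hull}. This step needs the existence of a maximizer in~\eqref{eq:secondary}. I would argue it from compactness:
\begin{itemize}
\item $\U_i$ is taken compact (or one restricts to a compact sublevel set of $J_i$);
\item the dynamics $g_i$ are continuous by Assumption~\ref{ass:K-boundedness}, so the set of inputs with $J_i\le J_i^\star(j)$ and all constraints satisfied is compact;
\item its image under the continuous map $u\mapsto x_i(M)$ is therefore compact;
\item $\phi_{i,j}$ is continuous, so by Weierstrass a maximizer exists.
\end{itemize}
The same argument shows $J_i^\star(j)$ is attained. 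If compactness cannot be secured, I would state existence of a maximizer as an implicit part of the implementation assumption (4).

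\emph{RI strictness.} Suppose $z_i(j)$ differs from some neighbor value. Then $\Cij$ contains two distinct points and is not a singleton. By assumption (3), Assumption~\ref{ass:RIexist_ocp} holds at $(i,j)$, so Lemma~\ref{lem:lex_RI} gives $z_i(j+1)\in\ri(\Cij)$, which is~\eqref{eq:A1b_ri_strictness}.

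\emph{Main obstacle.} The delicate point is a mismatch between Lemma~\ref{lem:lex_RI} and the secondary problem. Assumption~\ref{ass:RIexist_ocp} guarantees an interior point of $F_i(j)$, but the maximization in~\eqref{eq:secondary} runs only over the primary-optimal subset, where $J_i\le J_i^\star(j)$. An interior feasible point need not be primary-optimal. I would handle this by reading $F_i(j)$ as in Definition~\ref{def:Fi}, namely as terminal points produced by optimizers $u_i^\star$. With that reading, Assumption~\ref{ass:RIexist_ocp} directly asserts that some optimal terminal point is interior, and the proof of Lemma~\ref{lem:lex_RI} goes through. I would state this interpretation explicitly. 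With both items established uniformly in $i$ and $j$ (with $\eta=0$), Theorem~\ref{thm:moreau_nes_suf} yields $\lim_{j\to\infty}V(z(j))=0$, using Lemma~\ref{lem:one_step} for monotonicity of $V$.
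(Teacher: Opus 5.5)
Your proposal is correct and follows the paper's own proof. Hull inclusion comes from the terminal constraint of~\eqref{eq:OCP}, RI strictness from Assumption~\ref{ass:RIexist_ocp} via Lemma~\ref{lem:lex_RI}, consensus from the sufficiency part of Theorem~\ref{thm:moreau_nes_suf}, and the necessity statement is quoted directly. Your two additions are steps the paper takes for granted, and stating them explicitly is an improvement: the existence of a maximizer of~\eqref{eq:secondary}, and reading $F_i(j)$ literally per Definition~\ref{def:Fi} as the set of primary-optimal terminal points so that Lemma~\ref{lem:lex_RI} actually applies. Note, though, that under this reading the later integrator corollaries exhibit only a feasible interior terminal point, so they would not by themselves verify Assumption~\ref{ass:RIexist_ocp}.
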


\begin{proof}
Hull inclusion \eqref{eq:eps_hull} holds because it is enforced as the terminal constraint of OCP~\eqref{eq:OCP}.
If $\Cij$ is a singleton then $z_i(j+1)\in \ri(\Cij)$ trivially. Otherwise, by
Assumption~\ref{ass:RIexist_ocp} and Lemma~\ref{lem:lex_RI}, the implemented terminal point lies in
$\ri(\Cij)$. Hence Assumption~\ref{ass:asymp_moreau_A1} holds for the implemented closed loop.
With Assumption~\ref{ass:joint_tree}, the sufficiency part of Theorem~\ref{thm:moreau_nes_suf} yields asymptotic
consensus.
The necessity statement is exactly the necessity part of Theorem~\ref{thm:moreau_nes_suf}.
\end{proof}

\begin{remark}[What if Assumption~\ref{ass:RIexist_ocp} fails?]\label{rem:if_RIexist_fails}
If for some $(i,j)$ one has $F_i(j)\cap \ri(C_i(j))=\emptyset$, then RI strictness cannot be achieved at
that time by any selection rule without modifying constraints. Consensus may still occur in specific
instances, but it cannot be guaranteed in general by geometric-type strictness arguments as in~\eqref{eq:A1b_ri_strictness}.
In that case, one must either:
(i) accept an additional trajectory-level assumption (e.g.\ strictness occurs infinitely often on extreme
faces), or (ii) modify the MPC formulation (e.g.\ soften constraints or add an interiority-promoting term).
\end{remark}

\subsection{Distributed multi-step MPC algorithm with lexicographic selection}
Now we present our final distributed multi-step MPC algorithm with lexicographic selection.
Before proceeding further, we raise a remark aiming to numerically enhance the robustness of the proposed algorithm.

\begin{remark}[On numerical implementation of the lexicographic selection]\label{rem:lex_tolerance}
The lexicographic (secondary) problem \eqref{eq:secondary} is formulated with the exact primary-optimality constraint \(J_i \le J_i^\star(j)\).
All subsequent results, in particular Lemma~\ref{lem:lex_RI} and Theorem~\ref{thm:consensus_ocp_lex}, are therefore stated in terms of the \emph{primary-optimal set} of OCP~\eqref{eq:OCP}.

In numerical implementations, however, optimal control problems are solved in finite precision, and solver tolerances may lead to solutions whose objective value differs slightly from \(J_i^\star(j)\). In practice, this is often handled by enforcing the primary-optimality constraint in \eqref{eq:secondary} up to numerical tolerances, or
equivalently by allowing a very small slack \(\delta_{\mathrm{lex}}>0\) and considering the \(\delta_{\mathrm{lex}}\)-suboptimal set
\[
\{\, J_i \le J_i^\star(j) + \delta_{\mathrm{lex}} \,\}.
\]
If such a tolerance is introduced, the implemented terminal state may be only \emph{near-optimal} rather than exactly primary-optimal, and Lemma~\ref{lem:lex_RI} and Theorem~\ref{thm:consensus_ocp_lex} would need to be reworded accordingly, replacing the primary-optimal set by the \(\delta_{\mathrm{lex}}\)-suboptimal set.

Since this modification does not affect the conceptual role of the lexicographic selection but would unnecessarily complicate the theoretical exposition, we do not consider it explicitly in this paper. Nevertheless, the introduction of a small \(\delta_{\mathrm{lex}}>0\) is natural and often useful in numerical implementations.
\end{remark}

Now we integrate the lexicographic selection into our original distributed multi-step MPC algorithm Algorithm~\ref{alg:dmproutine} to summarize our final controller design.
Note that we consider the optional, practically useful lexicographic tolerance $\delta_{\mathrm{lex}}$ according to Remark~\ref{rem:lex_tolerance}.

\begin{algorithm}[ht]
\caption{Distributed multi-step MPC routine with lexicographic selection}
\label{alg:dmproutine_lex}
\begin{algorithmic}[1]
\State Fix $M\in\N$, $\varepsilon\ge 0$ and a lexicographic tolerance $\delta_{\mathrm{lex}} \geq 0$.
\State Initialize $x_i(0)\in \X_i$ for all agents $i$.
\For{$j=0,1,2,\dots$}
  \State Each agent $i$ transmits $z_i(j)=x_i(jM)$ to neighbors and receives $\{z_k(j):k\in\mathcal{N}_i(j)\}$.
  \State Agent $i$ constructs the local convex hull $\mathcal{C}_i(j)$ and the local barycenter $\bar z_i(j)$.
  \State Agent $i$ solves the local OCP \eqref{eq:OCP} (primary problem) and obtains an optimal input sequence
        $u_i^{\star}(0{:}M{-}1)$ and the optimal value $J_i^{\star}(j)$.

  \State \textbf{Lexicographic selection (secondary problem).}
  \Statex \hspace{1.5em} Among all feasible solutions of OCP \eqref{eq:OCP} whose primary cost satisfies
  \Statex \hspace{1.5em} $J_i \le J_i^{\star}(j) + \delta_{\mathrm{lex}}$,
  \Statex \hspace{1.5em} agent $i$ selects a solution that maximizes the interiority measure
  \Statex \hspace{1.5em} $\phi_{i,j}(x_i(M)):=\dist\!\big(x_i(M),\partial \mathcal{C}_i(j)\big)$.
  \Statex \hspace{1.5em} Denote the resulting lexicographically selected optimal input by
  \Statex \hspace{1.5em} $u_i^{\dagger}(0{:}M{-}1)$ and its associated terminal state by $x_i^{\dagger}(M)$.

  \State Apply $u_i(t)=u_i^{\dagger}(t-jM)$ for inner times $t=jM,\dots,jM+M-1$ (open loop).
\EndFor
\end{algorithmic}
\end{algorithm}

\section{Application to Agents with Linear Dynamics}\label{sec:linear-dynamics}

We briefly specialize the discussion to heterogeneous discrete-time linear agents
\begin{equation}\label{eq:lti_agents}
x_i(t{+}1)=A_i x_i(t)+B_i u_i(t),\qquad x_i(t)\in \X_i,\;u_i(t)\in \U_i,
\end{equation}
where $\X_i\subset\R^d$ and $\U_i\subset\R^{m_i}$ are nonempty, convex, and compact.

\begin{lemma}[Open-loop reachability ball in $M$ steps]\label{lem:reach_ball}
Fix $i$ and $M\in\N$. Define the $M$-step controllability matrix
\[
C_{i,M}:=\big[B_i\;\;A_iB_i\;\;\dots\;\;A_i^{M-1}B_i\big]\in\R^{d\times (m_iM)}.
\]
Assume $\rank(C_{i,M})=d$ and that $\U_i$ contains a Euclidean ball around the origin,
i.e., there exists $r_{u,i}>0$ such that $B_{r_{u,i}}\subseteq \U_i$.
Then for every initial state $\xi\in\R^d$ the set of states reachable in $M$ steps under inputs
$u(0{:}M{-}1)\in \U_i^M$ contains a Euclidean ball:
\[
\B_{\rho_i}(A_i^M\xi)\subseteq \Big\{\,x_i(M)\,:\, x_i(0)=\xi,\; u(0{:}M{-}1)\in \U_i^M\,\Big\},
\]
where one may take $\rho_i := \sigma_{\min}(C_{i,M})\,r_{u,i}>0$ and
$\sigma_{\min}(C_{i,M})$ denotes the smallest singular value.
\end{lemma}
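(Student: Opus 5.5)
The plan is to use the explicit solution of the linear recursion \eqref{eq:lti_agents} over $M$ steps and invert the controllability map with a minimum-norm right inverse. With $x_i(0)=\xi$, iterating the dynamics gives
\[
x_i(M)=A_i^M\xi+\sum_{k=0}^{M-1}A_i^{M-1-k}B_i\,u(k)=A_i^M\xi+C_{i,M}\,\mathbf U,
\]
where $\mathbf U:=\big(u(M-1),u(M-2),\dots,u(0)\big)\in\R^{m_iM}$ is the reversed stacked input vector. Hence the $M$-step reachable set is exactly $A_i^M\xi+C_{i,M}\,\mathcal W$, where $\mathcal W$ is the set of stacked vectors whose blocks all lie in $\U_i$. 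The claim therefore reduces to a statement about the image of $\mathcal W$ under $C_{i,M}$.

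Fix a target $y\in\B_{\rho_i}(A_i^M\xi)$ and set $w:=y-A_i^M\xi$, so that $\|w\|\le\rho_i$. Since $\rank(C_{i,M})=d$, the matrix $C_{i,M}$ has full row rank. Consequently $C_{i,M}C_{i,M}^\top$ is invertible, and the Moore--Penrose right inverse $C_{i,M}^{+}=C_{i,M}^\top(C_{i,M}C_{i,M}^\top)^{-1}$ satisfies $C_{i,M}C_{i,M}^{+}=I_d$. Define $\mathbf U:=C_{i,M}^{+}w$. Then $C_{i,M}\mathbf U=w$, and so the trajectory reaches $x_i(M)=y$.

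It remains to check admissibility of the inputs. The spectral norm of $C_{i,M}^{+}$ equals $1/\sigma_{\min}(C_{i,M})$, where $\sigma_{\min}$ is understood as the $d$-th (smallest nonzero) singular value; this is positive by the rank assumption. Hence
\[
\|\mathbf U\|\le \frac{\|w\|}{\sigma_{\min}(C_{i,M})}\le\frac{\rho_i}{\sigma_{\min}(C_{i,M})}=r_{u,i}.
\]
Each block $u(k)$ is a sub-vector of $\mathbf U$, so $\|u(k)\|\le\|\mathbf U\|\le r_{u,i}$. Therefore $u(k)\in\B_{r_{u,i}}\subseteq\U_i$ for every $k$, and $y$ is reachable with inputs in $\U_i^M$. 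Since $y$ was arbitrary in the ball, the inclusion follows.

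There is no serious obstacle. The only points needing care are the following.
\begin{enumerate}
\item The ordering of blocks in $\mathbf U$ must match the columns of $C_{i,M}$.
\item Because $C_{i,M}$ is wide, $\sigma_{\min}$ must be interpreted as the smallest of its $d$ singular values, not as a zero singular value of $C_{i,M}^\top C_{i,M}$.
\item The reachable set in the statement imposes only input constraints. State constraints $x_i(k)\in\X_i$ are not part of the claim and need not be checked.
\end{enumerate}
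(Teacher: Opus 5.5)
Your proof is correct and follows the same route as the paper: the lifting $x_i(M)=A_i^M\xi+C_{i,M}\mathbf U$, containment of the stacked input ball of radius $r_{u,i}$ in $\U_i^M$, and surjectivity of $C_{i,M}$, which gives $C_{i,M}\B_{r_{u,i}}\supseteq\B_{\sigma_{\min}(C_{i,M})r_{u,i}}$. Your explicit right inverse $C_{i,M}^{+}$, with spectral norm $1/\sigma_{\min}(C_{i,M})$, supplies the justification that the paper only asserts for this last inclusion. Your reversed stacking also correctly matches the column order of $C_{i,M}$, whereas the paper's ordering $(u(0),\dots,u(M-1))$ is off by a block permutation that does no harm here.
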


\begin{proof}
Stack the inputs into $U:=(u(0)^\top,\dots,u(M{-}1)^\top)^\top\in\R^{m_iM}$. The $M$-step
state satisfies the standard lifting relation
\[
x_i(M)=A_i^M\xi+C_{i,M}U.
\]
Because $\B_{r_{u,i}}\subseteq \U_i$, we have $\B_{r_{u,i}}^M\subseteq \U_i^M$, and hence
$\B_{r_{u,i}}^{m_iM}\subseteq \{U: u(k)\in \U_i\}$ (up to identifying the product of balls with a ball
in the stacked space). Therefore $C_{i,M}\B_{r_{u,i}}^{m_iM}$ is contained in the $M$-step
reachable displacement set. Since $\rank(C_{i,M})=d$, the linear map $C_{i,M}$
is onto and satisfies
\[
C_{i,M}\B_{r_{u,i}}^{m_iM}\supseteq \B_{\sigma_{\min}(C_{i,M})r_{u,i}}^{d}
= \B_{\rho_i},
\]
which implies $\B_{\rho_i}(A_i^M\xi)\subseteq \{x_i(M)\}$ as claimed.
\end{proof}

\medskip

Define, as in Definition~\ref{def:Fi}, the feasible terminal set induced by OCP~\eqref{eq:OCP},
\[
F_i(j):=\big\{\xi\in\R^d:\exists\,u_i^\star(0{:}M{-}1)\ \text{s.t.}\ x_i^\star(M)=\xi\big\},
\]
and recall $F_i(j)\subseteq \Cij$.

\begin{corollary}[Explicit LTI conditions implying feasibility and Assumption~\ref{ass:RIexist_ocp}]\label{cor:lti_feas_ri}
Consider the distributed multi-step MPC based on OCP~\eqref{eq:OCP} for agents \eqref{eq:lti_agents}.
Fix $M\in\N$ and suppose that for each agent $i$:
\begin{enumerate}
\item[(L1)] $\rank(C_{i,M})=d$ and $\B_{r_{u,i}}\subseteq \U_i$ for some $r_{u,i}>0$.
\item[(L2)] (Slack state constraints along the $M$-step steering) For every outer time $j$ and every
agent $i$, there exists at least one $M$-step admissible input sequence that keeps the predicted
state within $\X_i$ and steers the endpoint into
\[
\B_{\rho_i}(\bar z_i(j))\cap \Ci(j),
\qquad \rho_i=\sigma_{\min}(C_{i,M})\,r_{u,i}.
\]
(For instance, (L2) holds whenever $\X_i$ is sufficiently large so that the straight-line steering
inputs constructed from Lemma~\ref{lem:reach_ball} remain in $\X_i$ for all relevant configurations.)
\item[(L3)] (Nondegenerate hull when disagreement exists) Whenever $\Ci(j)$ is not a singleton,
$\ri(\Ci(j))\neq\emptyset$ (equivalently: $\Ci(j)$ has positive dimension in its affine hull).
\end{enumerate}
Then the following hold for every $(i,j)$ with $\Ci(j)$ not a singleton:
\begin{enumerate}
\item[(a)] OCP~\eqref{eq:OCP} is feasible at $(i,j)$; in particular, the terminal inequality
$\|z_i(j{+}1)-\bar z_i(j)\|\le \alpha(\|z_i(j)-\bar z_i(j)\|)$ can be satisfied.
\item[(b)] Assumption~\ref{ass:RIexist_ocp} holds at $(i,j)$, i.e. $F_i(j)\cap \ri(\Ci(j))\neq\emptyset$.
\end{enumerate}
Consequently, under Assumption~\ref{ass:joint_tree} and the lexicographic implementation as in Algorithm~\ref{alg:dmproutine_lex} with $\delta_{\mathrm{lex}} = 0$,
Theorem~\ref{thm:consensus_ocp_lex} applies and yields asymptotic consensus.
\end{corollary}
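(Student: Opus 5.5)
The plan is to build one explicit admissible input sequence for each $(i,j)$ with $\Ci(j)$ not a singleton whose endpoint is the local barycenter $\bar z_i(j)$. The feasibility claim (a) and the relative-interior claim (b) would then both follow from that single point.

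\emph{Step 1 (the target lies in the admissible set).} I first check that $\bar z_i(j)$ belongs to $\B_{\rho_i}(\bar z_i(j))\cap\Ci(j)$. Membership in the ball is trivial. Membership in $\Ci(j)$ holds because, by \eqref{eq:barycenter}, $\bar z_i(j)$ is a convex combination of the generators $\{z_i(j)\}\cup\{z_k(j):k\in\Ni(j)\}$.

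\emph{Step 2 (the target is relative-interior).} Every generator of $\Ci(j)$ enters $\bar z_i(j)$ with the strictly positive weight $1/(1+|\Ni(j)|)$. By the standard fact that a convex combination of the generators of a polytope with all weights strictly positive lies in its relative interior, $\bar z_i(j)\in\ri(\Ci(j))$. Condition (L3) guarantees that this relative interior is a nondegenerate object whenever $\Ci(j)$ is not a singleton. If a more self-contained argument is wanted, one can take any $y\in\Ci(j)$ written with weights $\mu_k$. The points $\bar z_i(j)+t(\bar z_i(j)-y)$ then have weights $(1+t)/(1+|\Ni(j)|)-t\mu_k$, which stay nonnegative for small $t>0$. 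This is the line-extension characterization of $\ri$.

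\emph{Step 3 (feasibility).} I will read (L2) in the sense of its parenthetical remark: the straight-line steering built from Lemma~\ref{lem:reach_ball} reaches \emph{any} prescribed point of $\B_{\rho_i}(\bar z_i(j))\cap\Ci(j)$ while staying in $\X_i$. This reading is needed because (L2) literally only asserts that some point of that set is reachable. Applied to the target $\xi=\bar z_i(j)$, it gives an admissible $u_i(0{:}M-1)\in\U_i^M$ with states in $\X_i$ and $x_i(M)=\bar z_i(j)\in\X_i\cap\Ci(j)$. The terminal inequality then reads $0\le\alpha(\|z_i(j)-\bar z_i(j)\|)$, which holds trivially. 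It also covers the delicate subcase $z_i(j)=\bar z_i(j)$, where $\alpha(0)=0$ forces an exact hit. This proves (a).

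I then note that the feasible set of \eqref{eq:OCP} is compact. For linear dynamics it is the intersection of the compact set $\U_i^M$ with closed constraints, namely $\X_i$, the polytope $\Ci(j)$, and the continuous contraction inequality. Since $J_i$ is continuous, a minimizer exists, $J_i^\star(j)$ is well defined, and the secondary problem \eqref{eq:secondary} is posed over a nonempty compact set.

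\emph{Step 4 (Assumption~\ref{ass:RIexist_ocp}) and the main obstacle.} With Step 3, $\bar z_i(j)$ is a feasible terminal point lying in $\ri(\Ci(j))$, so (b) holds for $F_i(j)$ read as the set of feasible terminal points, which matches its name. The obstacle is that Definition~\ref{def:Fi} is phrased via optimizers $u_i^\star$, and Lemma~\ref{lem:lex_RI} selects among solutions with $J_i\le J_i^\star(j)$. Under that reading, one needs a \emph{primary-optimal} endpoint in $\ri(\Ci(j))$, which the barycenter construction does not directly supply.

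To handle this, I would first try convexity. For linear $g_i$ and convex constraints, the primary-optimal set is convex, and the contraction constraint is convex when $\alpha$ is linear. If one optimal endpoint lies on a proper face, I would try to show that moving it toward $\bar z_i(j)$ does not increase the stage cost $\|x_i(k)-\bar z_i(j)\|_Q^2$. If that fails, I would state (b) for the feasible terminal set and remark that Algorithm~\ref{alg:dmproutine_lex} with a small $\delta_{\mathrm{lex}}$ picks up the interior point, in line with Remark~\ref{rem:lex_tolerance}.

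Finally, (a), (b), Assumption~\ref{ass:joint_tree} and the lexicographic implementation verify the hypotheses of Theorem~\ref{thm:consensus_ocp_lex}, which yields asymptotic consensus. The singleton case is covered trivially by Case A.
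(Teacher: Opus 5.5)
Your proposal is correct and follows the paper's proof: steer to the barycenter using (L2), obtain the terminal inequality for free, and use $\bar z_i(j)\in\ri(\Ci(j))$ to get (b) with $F_i(j)$ understood as the set of feasible endpoints, where you read (L2) as allowing a prescribed endpoint in $\B_{\rho_i}(\bar z_i(j))\cap\Ci(j)$, as the paper also does implicitly; your version is tidier, since the strictly positive barycentric weights make the paper's case $\bar z\notin\ri(\C)$ vacuous, and your exact hit covers $z_i(j)=\bar z_i(j)$, where the paper's ``arbitrarily close'' alternative cannot meet $\|\xi-\bar z\|\le\alpha(0)=0$. In Step~4, adopt the feasible-endpoint reading (it is exactly what the paper's proof establishes) and drop the convexity plan, which cannot work in general: for $M=1$ the cost is a constant plus $\|u_i(0)-u_i(-1)\|_R^2$, so in the configuration of Proposition~\ref{prop:impossibility} with $u_1(-1)=(0,0.5)$ the unique primary optimizer ends at $(0,0.5)\in\partial\C_1(0)$ even though (L1)--(L3) hold, hence an interior primary-optimal endpoint is not available in general, and that missing step is a weakness of Lemma~\ref{lem:lex_RI} with $\delta_{\mathrm{lex}}=0$ rather than of this corollary.
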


\begin{proof}
Fix $(i,j)$ and abbreviate $\C:=\Cij$ and $\bar z:=\bar z_i(j)$.
By (L2), there exists an admissible $M$-step input sequence producing an endpoint
$\xi\in \B_{\rho_i}(\bar z)\cap \C$. In particular, $\xi\in \C$, so the terminal hull constraint of OCP~\eqref{eq:OCP}
is satisfied. Moreover, since $\xi\in \B_{\rho_i}(\bar z)$, we can choose in (L2) either $\xi=\bar z$ (if
$\bar z$ itself is reachable under the admissibility requirement) or otherwise a point $\xi$ arbitrarily
close to $\bar z$. In both cases the terminal contraction inequality of OCP~\eqref{eq:OCP} is satisfied because
\[
\|\xi-\bar z\| \le \alpha(\|z_i(j)-\bar z\|),
\]
holds trivially when $\xi=\bar z$ (left-hand side $=0$) and can be ensured when $\xi$ is sufficiently
close to $\bar z$ by continuity and monotonicity of $\alpha<\id$. This proves feasibility (a).

To show (b), assume $\C$ is not a singleton. By (L3), $\ri(\C)\neq\emptyset$. Since $\bar z$ is a convex
combination of points in $\C$ (by definition of the barycenter), it belongs to $\C$. If $\bar z\in\ri(\C)$,
then $\B_{\rho_i}(\bar z)\cap \ri(\C)\neq\emptyset$ for all sufficiently small radii, and by (L2) we may
choose $\xi\in \B_{\rho_i}(\bar z)\cap \ri(\C)\subseteq \C$, hence $\xi\in F_i(j)\cap\ri(\C)$.
If $\bar z\notin\ri(\C)$, then (L2) still guarantees existence of an admissible endpoint in
$\B_{\rho_i}(\bar z)\cap \C$; by (L3) and basic convex geometry, any neighborhood (in the affine hull)
of a non-singleton convex set intersects its RI, hence we can refine the choice in (L2)
to obtain an admissible endpoint $\xi\in \B_{\rho_i}(\bar z)\cap \ri(\C)$. In either case,
$F_i(j)\cap\ri(\C)\neq\emptyset$, which is Assumption~\ref{ass:RIexist_ocp}.

The final claim follows by invoking Lemma~\ref{lem:lex_RI} and then Theorem~\ref{thm:consensus_ocp_lex}.
\end{proof}

\begin{remark}[Interpretation and a simple sufficient special case]\label{rem:lti_interpretation}
Condition (L1) is an explicit, checkable $M$-step controllability condition.
Condition (L2) captures precisely the informal requirement that \emph{constraints are not too tight}:
there must exist at least one admissible $M$-step steering that reaches a neighborhood of the
local barycenter while remaining inside $\X_i$ and $\U_i$.
A particularly transparent sufficient case is the following: if $\X_i=\R^d$ (or $\X_i$ is a large convex
set containing all predicted trajectories of interest), and $\B_{r_{u,i}}\subseteq \U_i$ with
$\rank(C_{i,M})=d$, then Lemma~\ref{lem:reach_ball} implies that every point in a ball around
$A_i^M z_i(j)$ is reachable in $M$ steps. If, in addition, $\bar z_i(j)$ lies within that reachable ball
and $\Cij$ is non-singleton, then OCP~\eqref{eq:OCP} is feasible and Assumption~\ref{ass:RIexist_ocp} holds automatically.
\end{remark}

\subsection{Explicit special cases: single- and double-integrators}\label{subsec:explicit_integrators}

Throughout this subsection, we focus on single- and double-integrators and assume that only input constraints are present.
This is exactly the regime in which one can give closed-form feasibility and horizon bounds. 
The results can be extended to additional state constraints by replacing the explicit bounds
below with an a~priori invariant ``tube'' condition as in Corollary~\ref{cor:lti_feas_ri}.

\medskip

\begin{corollary}[Single-integrators: explicit horizon and automatic RI existence]\label{cor:single_integrator_explicit}
Consider agents with discrete-time single-integrator dynamics
\begin{equation}\label{eq:si}
x_i(t{+}1)=x_i(t)+u_i(t),\qquad u_i(t)\in \U_i:=\{u\in\R^d:\|u\|\le u_{i,\max}\},
\end{equation}
and assume no additional state constraints along the prediction horizon of OCP~\eqref{eq:OCP}.
Fix an outer time $j$ and let $z_i(j)=x_i(jM)$ and $\bar z_i(j)\in \Cij$ be the local barycenter as in~\eqref{eq:barycenter}.

Recall the global diameter $V$ as in~\eqref{eq:V_diameter}.
Let $u_{\min} := min_{i\in\V} u_{i,\max}$ and assume that $u_{\min}>0$ for all agents.

Let the terminal inequality in OCP~\eqref{eq:OCP} be given by
\[
\|x_i(M)-\bar z_i(j)\|\le \alpha(\|z_i(j)-\bar z_i(j)\|),\qquad \alpha<\id.
\]
Then the following hold.

\begin{enumerate}
\item[(a)] \textbf{Explicit feasibility ($M$-step reachability).}
If
\begin{equation}\label{eq:M_SI}
M \ \ge\ \Big\lceil \frac{V(z(0))}{u_{\min}}\Big\rceil,
\end{equation}
then for every $i$ and every $j\ge 0$, OCP~\eqref{eq:OCP} is feasible for agent $i\in\V$ at time $j$.

\item[(b)] \textbf{Automatic RI existence.}
If $\Cij$ is not a singleton, then Assumption~\ref{ass:RIexist_ocp} holds for all $i\in\V,j\geq 0$.

\item[(c)] \textbf{Consensus under lexicographic selection.}
Under joint connectivity (Assumption~\ref{ass:joint_tree}) and the lexicographic implementation, Theorem~\ref{thm:consensus_ocp_lex} applies and yields asymptotic consensus.
\end{enumerate}

\end{corollary}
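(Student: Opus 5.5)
The plan is to prove (a) by an explicit straight-line steering toward the local barycenter, to obtain (b) by perturbing that endpoint into the relative interior, and to get (c) by plugging (a) and (b) into Theorem~\ref{thm:consensus_ocp_lex}.

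\emph{Part (a).} The first step is an a priori bound on distances that holds for all $j$, not just $j=0$. Suppose, inductively, that at time $j$ the hull inclusion \eqref{eq:eps_hull} has held for all agents at all earlier steps. Lemma~\ref{lem:one_step} then gives $V(z(j))\le V(z(0))$. Both $z_i(j)$ and $\bar z_i(j)$ lie in $H(z(j))$, so
\[
\|z_i(j)-\bar z_i(j)\|\le V(z(j))\le V(z(0)).
\]
Next I construct a feasible control. Set $u_i(k):=(\bar z_i(j)-z_i(j))/M$ for $k=0,\dots,M-1$. Its norm is at most $V(z(0))/M\le u_{\min}\le u_{i,\max}$ by \eqref{eq:M_SI}, so the input constraint holds. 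The resulting endpoint is $x_i(M)=\bar z_i(j)\in\Cij$, which satisfies the hull constraint. The terminal contraction inequality holds trivially because its left-hand side is $0$. There are no state constraints, so this sequence is feasible. Feasibility at step $j$ in turn gives hull inclusion at step $j+1$, which closes the induction over $j$.

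\emph{Part (b).} This is the step I expect to be the main obstacle. The endpoint $\bar z_i(j)$ found in (a) need not lie in $\ri(\Cij)$ by itself, so I first show that it does. The barycenter is a convex combination of the generating points $\{z_i(j)\}\cup\{z_k(j)\}$ with all weights strictly positive, namely $1/(1+|\Ni(j)|)$. By the standard fact that a convex combination with all weights positive of a finite generating set lies in the relative interior of its convex hull, $\bar z_i(j)\in\ri(\Cij)$.

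It remains to check that this endpoint belongs to $F_i(j)$. If $F_i(j)$ is read as the set of feasible terminal points, we are done. If it is read strictly as the set of endpoints of optimizers, I also need a relative-interior optimizer. For that I would argue via the secondary problem, showing its feasible set contains a relative-interior point. One option is to note that the cost $J_i$ is continuous and that the reachable set contains a ball around $z_i(j)$ of radius $Mu_{i,\max}$, which covers $\bar z_i(j)$ with margin once $V>0$ and $M$ is large. However, if every optimizer has its endpoint on $\rb\Cij$, this does not obviously help. I would therefore adopt the feasible-terminal-set reading, consistent with how Definition~\ref{def:Fi} and Corollary~\ref{cor:lti_feas_ri}(b) are used in the paper. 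Under that reading $F_i(j)\cap\ri(\Cij)\ni\bar z_i(j)$.

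\emph{Part (c).} With feasibility from (a) and Assumption~\ref{ass:RIexist_ocp} from (b), all hypotheses of Theorem~\ref{thm:consensus_ocp_lex} are met under Assumption~\ref{ass:joint_tree} and the lexicographic rule, and it yields asymptotic consensus.
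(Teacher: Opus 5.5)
Your argument is essentially the paper's proof. For (a) you use straight-line steering to $\bar z_i(j)$ with $\|z_i(j)-\bar z_i(j)\|\le V(z(j))\le V(z(0))$, and you make explicit the induction over $j$ that the paper's ``iterative application of Lemma~\ref{lem:one_step}'' leaves implicit. For (b) you use that the positive-weight barycenter lies in $\ri(\Cij)$, under the same feasible-endpoint reading of $F_i(j)$ the paper uses, and for (c) you appeal directly to Theorem~\ref{thm:consensus_ocp_lex}.

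Your caveat is well founded: under that reading, Lemma~\ref{lem:lex_RI} needs a relative-interior point among the primary optimizers, not merely among feasible endpoints. That gap is in the paper's own chain of results, though, not one your proof introduces.
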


\begin{proof}
Fix $i$ and $j$. For single-integrators, any endpoint $\xi$ can be reached in $M$ steps by choosing a
constant input
\[
u_i(t)=\frac{\xi-z_i(j)}{M},\qquad t=jM,\dots,jM+M-1,
\]
provided $\|u_i(t)\|\le u_{i,\max}$, equivalently $\|\xi-z_i(j)\|\le M u_{i,\max}$.

\emph{Step 1: feasibility.} Choose the terminal point $\xi=\bar z_i(j)$. Since $\bar z_i(j)\in \Cij$,
\[
\|\bar z_i(j)-z_i(j)\|\ \le\ \diam(\Cij)\ \le\ V(z(j))\ \le\ V(z(0)),
\]
where $V(z(j))\le V(z(0))$ follows from iterative application of Lemma~\ref{lem:one_step}.
Thus, if $M u_{i,\max}\ge V(z(0))$ (and hence if \eqref{eq:M_SI} holds), then $\bar z_i(j)$ is reachable in
$M$ steps under the input bound. With $\xi=\bar z_i(j)$ the terminal inequality holds trivially because
$\|\xi-\bar z_i(j)\|=0\le \alpha(\|z_i(j)-\bar z_i(j)\|)$, and the terminal hull constraint $\xi\in \Cij$
also holds. Hence OCP~\eqref{eq:OCP} is feasible for all $(i,j)$.

\emph{Step 2: RI existence.} 
If $\Cij$ is not a singleton, then $\ri(\Cij)\neq\emptyset$. For any
convex hull of finitely many points, the barycenter with strictly positive weights belongs to the
RI of that hull (relative to its affine hull). In particular, the barycenter $\bar z_i(j)\in\ri(\Cij)$ whenever not all points in its defining set coincide. 
Therefore, the reachable terminal point $\xi=\bar z_i(j)$ constructed above satisfies $\xi\in F_i(j)\cap\ri(\Cij)$,
consequently Assumption~\ref{ass:RIexist_ocp} holds.

\emph{Step 3:} Now Lemma~\ref{lem:lex_RI} yields the RI strictness under the lexicographic selection whenever
Assumption~\ref{ass:RIexist_ocp} holds, and Theorem~\ref{thm:consensus_ocp_lex} concludes consensus under Assumption~\ref{ass:joint_tree}. 
\end{proof}

\medskip

\begin{corollary}[Double-integrators: explicit horizon and automatic RI existence]\label{cor:double_integrator_explicit}
Consider agents with discrete-time double-integrator dynamics
\begin{equation}\label{eq:di}
r_i(t{+}1)=r_i(t)+v_i(t),\qquad v_i(t{+}1)=v_i(t)+u_i(t),
\qquad \|u_i(t)\|\le u_{i,\max},
\end{equation}
and assume no additional state constraints along the prediction horizon of OCP~\eqref{eq:OCP}.
Let the consensus variable be the \emph{position} $r_i$.

Assume that OCP~\eqref{eq:OCP} is posed on the lifted state $x_i=(r_i,v_i)$ but its terminal hull constraint
is imposed on positions, i.e.
\[
r_i(jM)\in \C_i(j):=\co(\{r_i(jM)\}\cup\{r_k(jM):k\in\mathcal N_i(j)\}),
\]
and the terminal inequality contracts toward the local barycenter $\bar r_i(j)$ of that hull:
\[
\|r_i(jM)-\bar r_i(j)\|\le \alpha(\|r_i(jM)-\bar r_i(j)\|),\qquad \alpha<\id.
\]
Define the outer-time position diameter $V_r(r(j)):=\diam(H(r(jM))$.
Let $u_{\min} := min_{i\in\V} u_{i,\max}$ and assume that $u_{\min}>0$ for all agents.
Define bounds on the initial outer-time velocities
\[
v_{\max}:=\max_{i}\|v_i(0)\|.
\]
Choose $M$ such that
\begin{equation}\label{eq:M_DI}
M\ \ge\ \max\!\left\{2,\ \Big\lceil \frac{2v_{\max}}{u_{\min}}\Big\rceil
\ +\ 2\Big\lceil \sqrt{\frac{V_r(0)}{u_{\min}}}\Big\rceil\right\}.
\end{equation}
Then the following hold.
\begin{enumerate}
\item[(a)] \textbf{Explicit feasibility.} For every $i$ and every $j\ge 0$, OCP~\eqref{eq:OCP} is feasible at $(i,j)$.
\item[(b)] \textbf{Automatic RI existence.} If $\C_i(j)$ is not a singleton, then $F_i(j)\cap\ri(\C_i(j))\neq\emptyset$.
\item[(c)] \textbf{Consequently (consensus under lexicographic selection).}
Under joint connectivity (Assumption~\ref{ass:joint_tree}) and lexicographic implementation, Theorem~\ref{thm:consensus_ocp_lex} yields asymptotic consensus in positions.
\end{enumerate}
\end{corollary}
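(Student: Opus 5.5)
I would follow the template of Corollary~\ref{cor:single_integrator_explicit}: exhibit an explicit admissible $M$-step input sequence that steers the position of agent $i$ exactly to the local barycenter $\bar r_i(j)$. The target is chosen as the barycenter for two reasons. It lies in $\C_i(j)$, so the terminal hull constraint holds. It makes the terminal inequality trivial, since $0\le\alpha(\cdot)$.

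The steering is built in three phases. First, a braking phase of $M_1:=\lceil v_i(jM)\text{-norm}/u_{\min}\rceil$ steps applies $u_i=-v_i/\|v_i\|\cdot\min(\|v_i\|,u_{i,\max})$ until the velocity is zero. During braking the position drifts by at most $M_1\|v_i(jM)\|$. Second, a bang--bang ``accelerate then decelerate'' phase of $2N$ steps along the straight line from the current position to $\bar r_i(j)$. With constant magnitude $a\le u_{i,\max}$ it moves a distance of order $aN^2$ and ends at rest, so $N=\lceil\sqrt{D/u_{\min}}\rceil$ suffices for any distance $D$. Third, the remaining steps use zero input and hold position at rest. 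This is what gives the budget $\lceil 2v_{\max}/u_{\min}\rceil+2\lceil\sqrt{V_r(0)/u_{\min}}\rceil$ in \eqref{eq:M_DI}. The $\max\{2,\cdot\}$ only covers the degenerate case.

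The steps, in order, are as follows. (i) Bound the position displacement to be covered by the distance to $\bar r_i(j)$ plus the braking drift. Then use $\|\bar r_i(j)-r_i(jM)\|\le\diam\C_i(j)\le V_r(j)\le V_r(0)$, which follows from Lemma~\ref{lem:one_step} applied to positions; hull inclusion is enforced on positions. (ii) Verify the input bound in each phase and that the position is exactly $\bar r_i(j)$ at step $M$. (iii) Conclude feasibility (a). For (b), argue exactly as in Step~2 of Corollary~\ref{cor:single_integrator_explicit}: the equal-weight barycenter of a non-singleton finite point set lies in $\ri(\C_i(j))$, so $\bar r_i(j)\in F_i(j)\cap\ri(\C_i(j))$. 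Part (c) then follows from Lemma~\ref{lem:lex_RI} and Theorem~\ref{thm:consensus_ocp_lex}, which are purely geometric and apply verbatim to the position variables.

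The main obstacle is velocities at later outer times. The horizon bound is stated with $v_{\max}=\max_i\|v_i(0)\|$, but feasibility is needed for every $j$. My construction ends each outer interval at rest, and the lexicographic/primary optimizer need not. I would therefore first try to show inductively that $\|v_i(jM)\|\le v_{\max}$ along the implemented closed loop. If this cannot be extracted from the OCP, I would argue it via the $Q$/$R$-weighted cost or note that a terminal velocity bound is implicitly needed. A second issue is the braking drift, which adds up to $v_{\max}^2/u_{\min}$ to the distance to be covered. I would absorb it by splitting the velocity-cancellation and positioning phases more carefully, e.g.\ by adding a factor to the $\sqrt{\cdot}$ term. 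If the stated constant turns out too tight, I would present the proof with a slightly enlarged horizon and flag that \eqref{eq:M_DI} should be read up to this constant.
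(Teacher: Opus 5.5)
Your route is the paper's: brake, run a symmetric accelerate/decelerate maneuver, land exactly on $\bar r_i(j)$ so that both terminal conditions hold, use that the equal-weight barycenter lies in $\ri(\C_i(j))$, and invoke Lemma~\ref{lem:lex_RI} and Theorem~\ref{thm:consensus_ocp_lex}. As written, however, your proposal does not establish (a) for $j\ge1$, and hence not (c). The velocity obstacle you flag is a genuine gap, and the paper's proof does not close it either. Its Step~1 applies $u_i=-v_i(jM)/M_1$ with $M_1=\lceil 2v_{\max}/u_{\min}\rceil$ and asserts $\|u_i\|\le u_{\min}$; this tacitly assumes $\|v_i(jM)\|\le M_1u_{\min}$ for every $j$. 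Nothing in OCP~\eqref{eq:OCP} provides such a bound. There is no terminal velocity constraint, and $u_i(M-1)$ affects neither $r_i(M)$ nor any penalized state. So at an optimum it merely copies $u_i(M-2)$ through the rate term, and the implemented terminal velocity is generically nonzero. Your cost-based fallback bounds $\|v_i((j+1)M)\|$ only through $J_i^\star(j)$, which itself grows with $\|v_i(jM)\|$, so the induction does not close. You need to change the OCP or the hypotheses. One option is to impose $v_i(M)=0$: then $v_i(jM)=0$ for $j\ge1$, and your rest-to-rest maneuver of $2\lceil\sqrt{V_r(0)/u_{\min}}\rceil$ steps is a valid witness there, while at $j=0$ the horizon must also absorb the braking drift. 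The other is to assume an a priori bound on $\sup_j\|v_i(jM)\|$ and use it in place of $v_{\max}$. Note also that $V_r(j)\le V_r(0)$ belongs to the same induction, since Lemma~\ref{lem:one_step} needs feasibility at all earlier outer times.

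The braking drift is likewise a real flaw in the paper's Step~2. It bounds $\Delta r=\bar r_i(j)-r_i(jM+M_1)$ by $\|\bar r_i(j)-r_i(jM)\|$ and ignores the displacement $\tfrac{M_1+1}{2}v_i(jM)$ accumulated while braking. You do not need a larger horizon to fix this; just do not split the maneuver into phases. Since $r_i(jM+M)=r_i(jM)+Mv_i(jM)+\sum_{k=0}^{M-1}(M-1-k)u_i(jM+k)$, the reachable terminal positions form exactly the ball of radius $u_{i,\max}M(M-1)/2$ about $r_i(jM)+Mv_i(jM)$. Take $s:=\lceil\sqrt{V_r(0)/u_{\min}}\rceil\ge1$ and $\|v_i(jM)\|\le v_{\max}$. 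Then \eqref{eq:M_DI} gives $\tfrac{M-1}{2}-\tfrac{v_{\max}}{u_{\min}}\ge s-\tfrac12$ and $M\ge 2s$. Hence $u_{\min}\tfrac{M(M-1)}{2}-Mv_{\max}\ge u_{\min}(2s^2-s)\ge u_{\min}s^2\ge V_r(0)$, so a constant input reaches $\bar r_i(j)$ exactly. This settles (a) at $j=0$, and at any $j$ where the velocity bound is available. Finally, the $\max\{2,\cdot\}$ does not rescue the degenerate case. Suppose all positions coincide ($V_r(0)=0$) and $v_{\max}=\|v_i(0)\|=u_{i,\max}=u_{\min}$. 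Then $M=2$ satisfies \eqref{eq:M_DI} and $\C_i(0)=\{r_i(0)\}$. Yet $r_i(2)=r_i(0)+2v_i(0)+u_i(0)\neq r_i(0)$ for every admissible $u_i(0)$, so (a) is false as stated in that case.
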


\begin{proof}
We construct an explicit admissible $M$-step input that reaches the barycenter in position.

\emph{Step 1: velocity reset.}
Over $M_1:=\lceil 2v_{\max}/u_{\min}\rceil$ steps apply constant acceleration
$u_i(t)=-v_i(jM)/M_1$. Then $\|u_i(t)\|\le u_{\min}\le u_{i,\max}$ and $v_i(jM+M_1)=0$.

\emph{Step 2: rest-to-rest translation.}
Starting from $v=0$, a standard symmetric accelerate/decelerate maneuver of length $2M_2$
can move the position by any vector $\Delta r$ satisfying $\|\Delta r\|\le M_2^2 u_{i,\max}$:
accelerate with $u=\Delta r/M_2^2$ for $M_2$ steps, $M_2$ is specified below, and then apply $u=-\Delta r/M_2^2$ for
$M_2$ steps. The net velocity returns to zero and the net displacement is exactly $\Delta r$.

Set the target displacement $\Delta r:=\bar r_i(j)-r_i(jM+M_1)$ and choose
$M_2:=\left\lceil \sqrt{V_r(0)/u_{\min}}\right\rceil$. 
Since $V_r(j)\le V_r(0)$ and
$\|\bar r_i(j)-r_i(jM)\|\le V_r(j)$, we have $\|\Delta r\|\le V_r(0)$, hence
$\|\Delta r\|\le M_2^2 u_{\min}\le M_2^2 u_{i,\max}$, so the maneuver is admissible.

\emph{Step 3: feasibility and RI existence.}
Concatenating Step~1 and Step~2 yields a feasible $M=M_1+2M_2$ horizon that reaches the terminal
point $\xi=\bar r_i(j)$ with admissible inputs. This satisfies the terminal hull constraint
$\bar r_i(j)\in \C_i(j)$ and makes the terminal inequality trivial ($\|r_i(M)-\bar r_i(j)\|=0$).
Thus OCP~\eqref{eq:OCP} is feasible for all $(i,j)$.

If $\C_i(j)$ is not a singleton then $\bar r_i(j)\in\ri(\C_i(j))$ as in the single-integrator case,
hence $\bar r_i(j)\in F_i(j)\cap\ri(\C_i(j))$ and Assumption~\ref{ass:RIexist_ocp} holds.

\emph{Step 4: consensus.}
Now simply apply Lemma~\ref{lem:lex_RI} and Theorem~\ref{thm:consensus_ocp_lex}. 
\end{proof}

\section{Numerical Simulations}\label{sec:numerical-simulations}

In this section, we illustrate the performance of Algorithm~\ref{alg:dmproutine_lex} with lexicographic selection for agents with single- and double-integrator dynamics.
The simulations are performed using a distributed Python implementation based on a REQ/REP communication architecture~\cite{Hintjens.2013.zeromq,zeromq}, where a coordinator facilitates communication among the agents, each local controller with the corresponding agent.
All simulations were performed on a Windows 11 (64-bit) machine equipped with a 13th Gen Intel Core i5-1335U processor (10 cores, 1.30 GHz base frequency) and 32 GB RAM.
The implementation was carried out in Python 3.13.
Each agent solves its local OCP in parallel, while a coordinator aggregates the open-loop control sequences and propagates the agent dynamics over the finite-step horizon.

The objectives of the simulations are threefold:
\begin{enumerate}
    \item to verify the explicit finite-step bounds on $M$ derived in Corollary~\ref{cor:single_integrator_explicit} and Corollary~\ref{cor:double_integrator_explicit};
    \item to illustrate the asymptotic decay of the diameter function $V_r(j)$;
    \item to analyze the evolution of the interiority measure
    $
        \phi_{i,j} := \dist\!\bigl(x_i^\star(M;j), \partial \Cij \bigr),
    $
    and the activation of the lexicographic selection stage of Algorithm~\ref{alg:dmproutine_lex}.
\end{enumerate}

\subsection{Simulation setup}

We consider a network of $\ell=4$ agents in $\R^2$ with a time-invariant ring-form connected communication graph.
The initial conditions are chosen such that the initial convex hull is non-singleton and $V (0) > 0$.
For each outer step $j$, the agents solve their local OCP over a finite-step horizon $M$ as prescribed in Algorithm~\ref{alg:dmproutine_lex} with the lexicographic tolerance $\delta_\mathrm{lex} = 10^{-5}$.
We take the decay function $\alpha$ in the terminal inequality~\eqref{eq:OCP} linear as $\alpha_i (s)= 0.8 s$ and the control input bounds are chosen as $\|u_i\| \le 1$ for all $i=1,\dots,4$.

Picking the initial conditions for the position variables as
$x_1(0) = r_1(0) = (-4.0,  2.0)$, $x_2(0)=r_2(0)=( 3.5,  4.0)$, $x_3(0)=r_3(0)=(4.5,-3.5)$, $x_4(0)=r_4(0)=(-2.5, -4.0)$, as for the double-integrator the initial values for velocities $v_1(0) = (1.0,  0.0)$, $v_2(0)=(0.0, -1.0)$, $v_3(0)=(-1.0,  0.0)$, $v_4(0)=(0.0,  1.0)$, and the finite-step horizon $M$ is selected according to the explicit bounds:
\begin{itemize}
    \item Single-integrator (Corollary~\ref{cor:single_integrator_explicit}):
        $M = \left\lceil V(z(0)) \right\rceil = 11$;
    \item Double-integrator (Corollary~\ref{cor:double_integrator_explicit}):
        $M = \max\!\left\{2,\ \left\lceil 2v_{\max}\right\rceil
        + 2\left\lceil \sqrt{V_r(0)}\right\rceil \right\}
        = 12$.
\end{itemize}

\subsection{Single-integrator case}

We first consider the single-inetgrator dynamics $x_i(t+1) = x_i(t) + u_i(t)$ as in~\eqref{eq:si}.
Figure~\ref{fig:SI_states} shows the evolution of the position states over the inner steps. The outer-step instants are marked explicitly. The trajectories converge asymptotically to a common consensus value.
The corresponding control inputs are depicted in Figure~\ref{fig:SI_controls}. The control magnitudes remain bounded and decay as consensus is approached.
The evolution of the disagreement diameter function $V(z(j))$ is shown in Figure~\ref{fig:SI_Vr}.
As predicted by Corollary~\ref{cor:single_integrator_explicit}, $V(x(j))$ decreases monotonically and converges to zero.

\begin{figure}[t]
\centering
\includegraphics[width=0.9\linewidth]{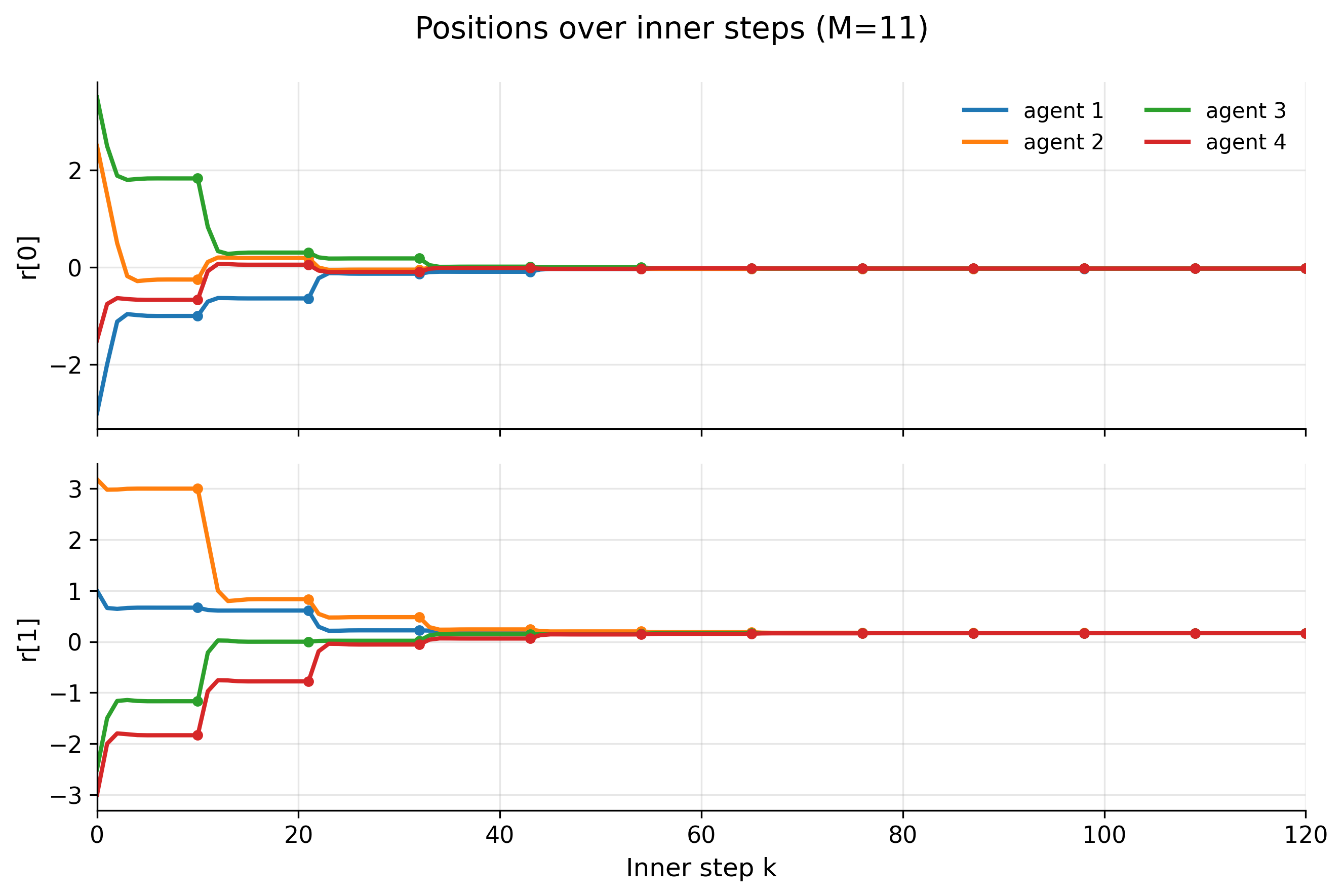}
\caption{State trajectories for (single-integrator).}
\label{fig:SI_states}
\end{figure}

\begin{figure}[t]
\centering
\includegraphics[width=0.9\linewidth]{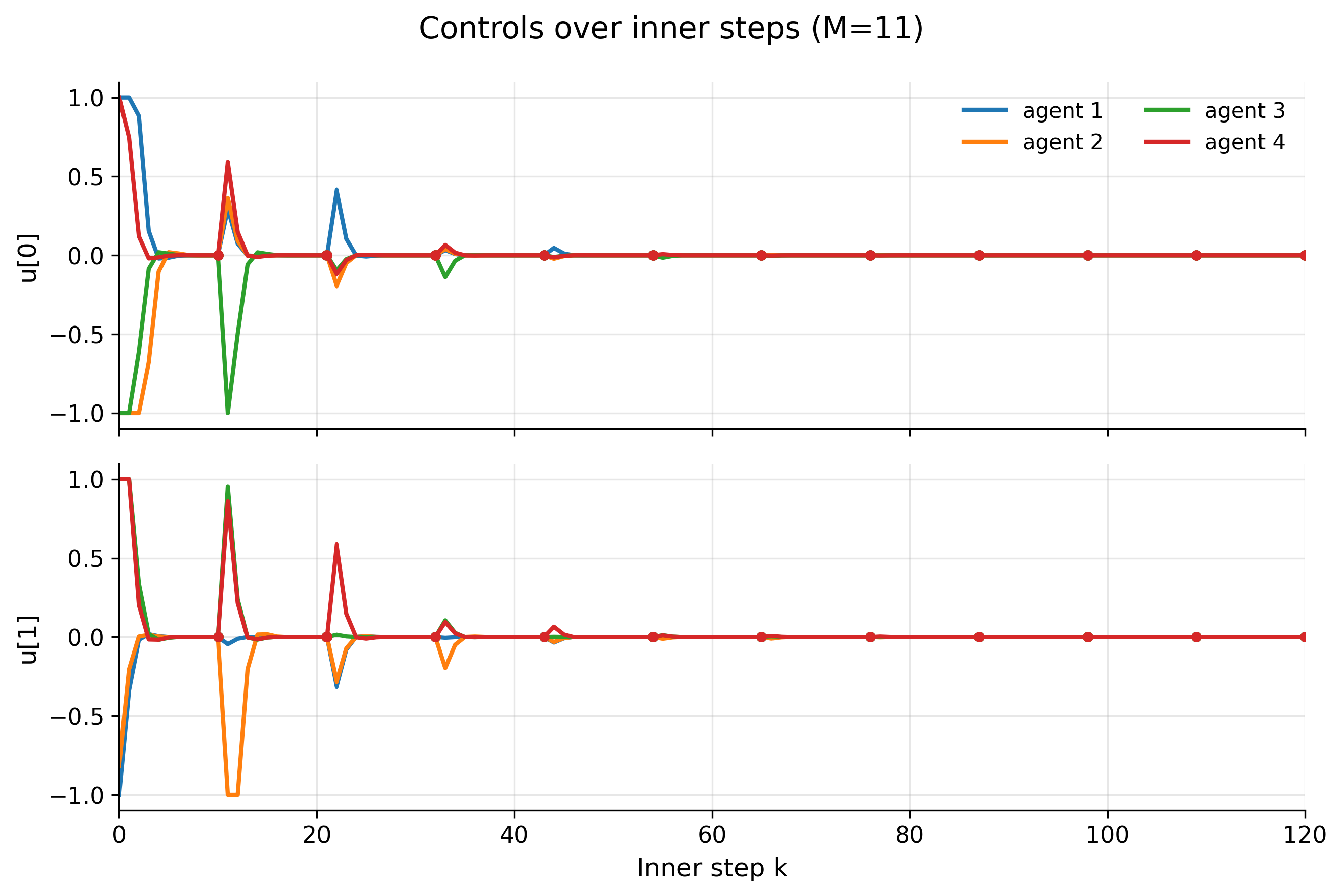}
\caption{Control inputs for (single-integrator).}
\label{fig:SI_controls}
\end{figure}

\begin{figure}[ht]
\centering
\includegraphics[width=0.9\linewidth]{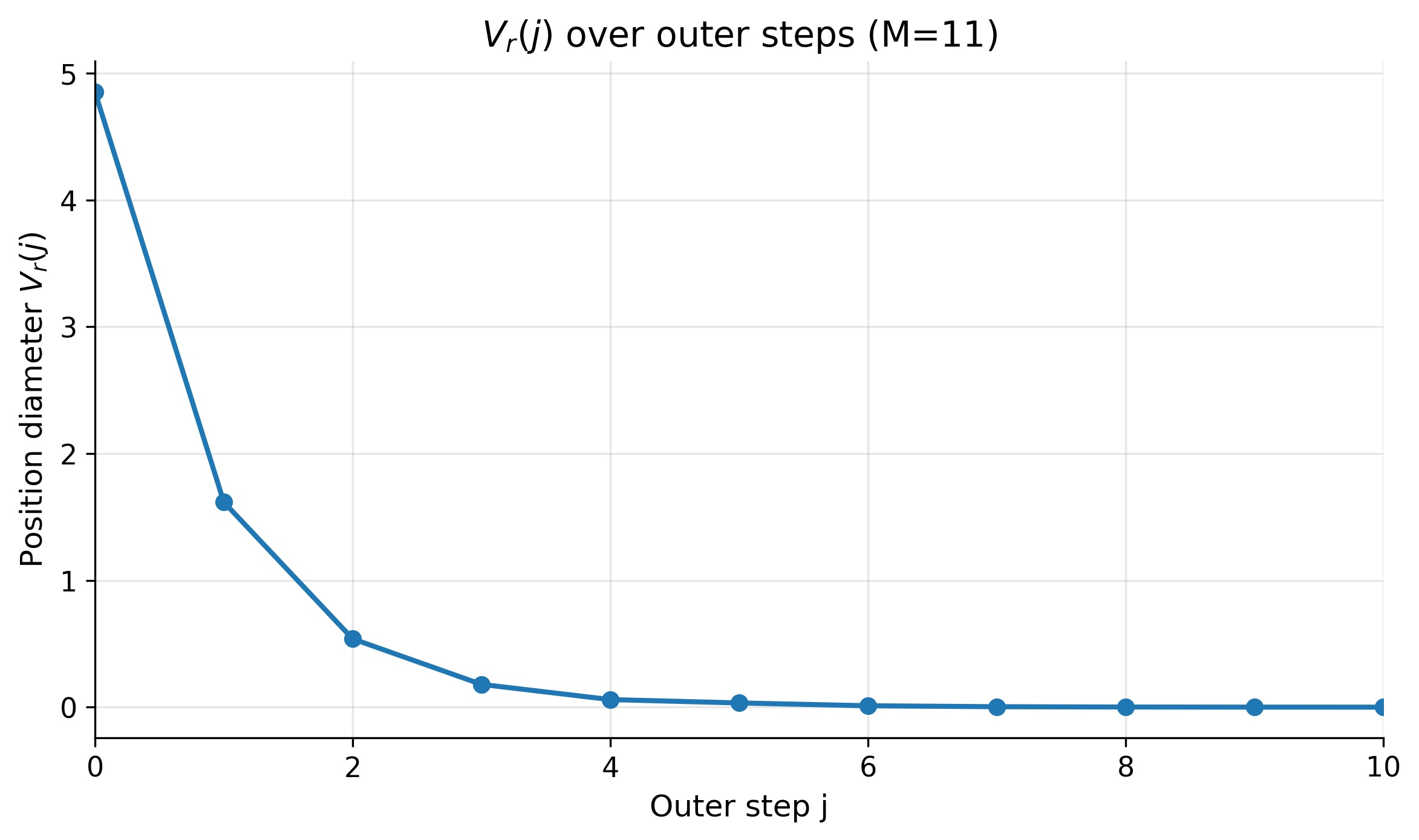}
\caption{Diameter function $V_r(j)$ over outer steps (single-integrator).}
\label{fig:SI_Vr}
\end{figure}

The interiority measure $\phi_{i,j}$ for each agent is reported in Figure~\ref{fig:SI_phi}. For all outer steps where $\mathcal C_i(j)$ is non-singleton, we observe $\phi_{i,j} > 0$,
indicating that the terminal point lies in the relative interior of $\mathcal C_i(j)$.
Consequently, the lexicographic stage of Algorithm~\ref{alg:dmproutine_lex} is not activated, as shown in Figure~\ref{fig:SI_lex}. The lexicographic flag remains zero throughout the simulation.

\begin{figure}[ht]
\centering
\includegraphics[width=0.9\linewidth]{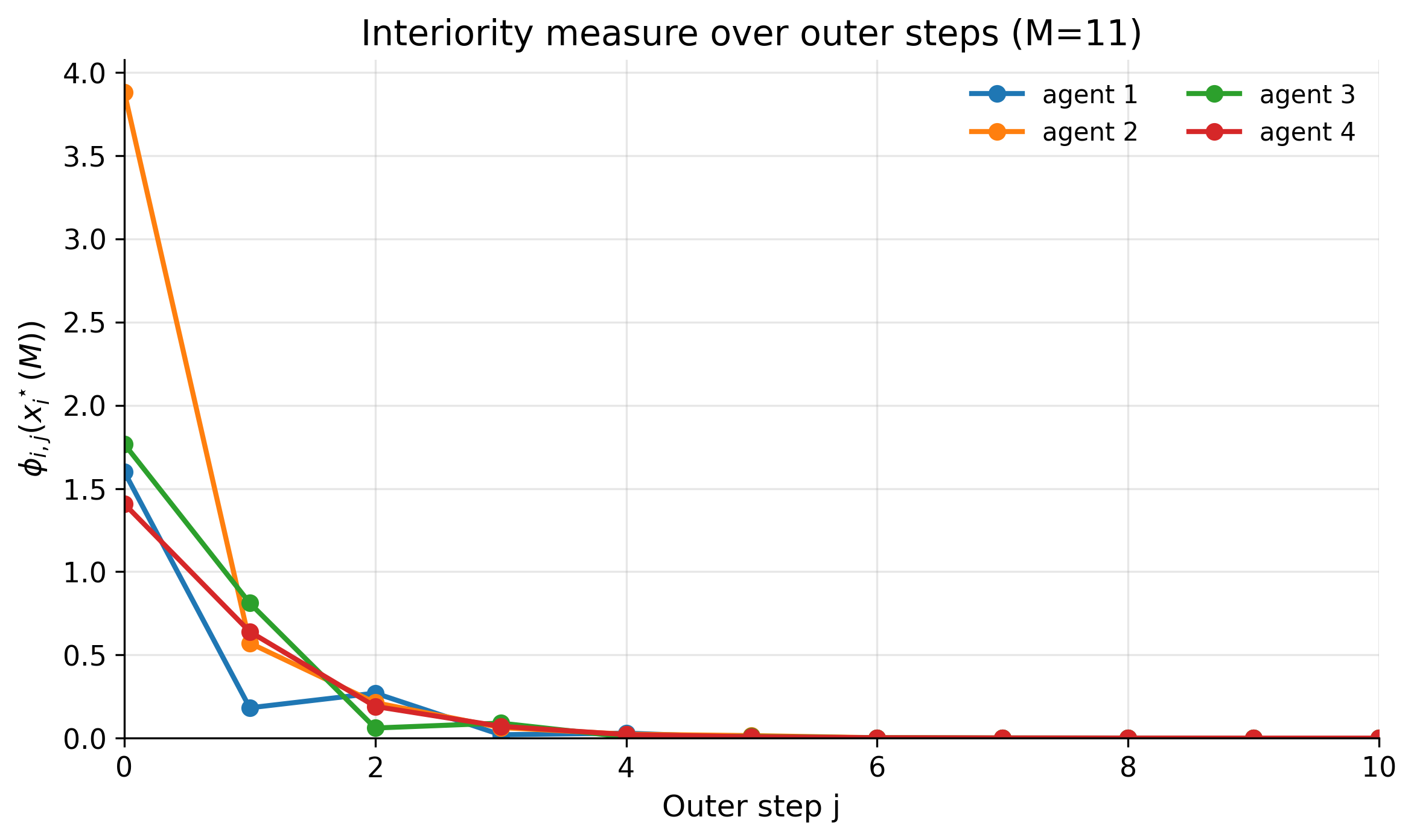}
\caption{Interiority measure $\phi_{i,j}$ (single-integrator).}
\label{fig:SI_phi}
\end{figure}

\begin{figure}[ht]
\centering
\includegraphics[width=0.9\linewidth]{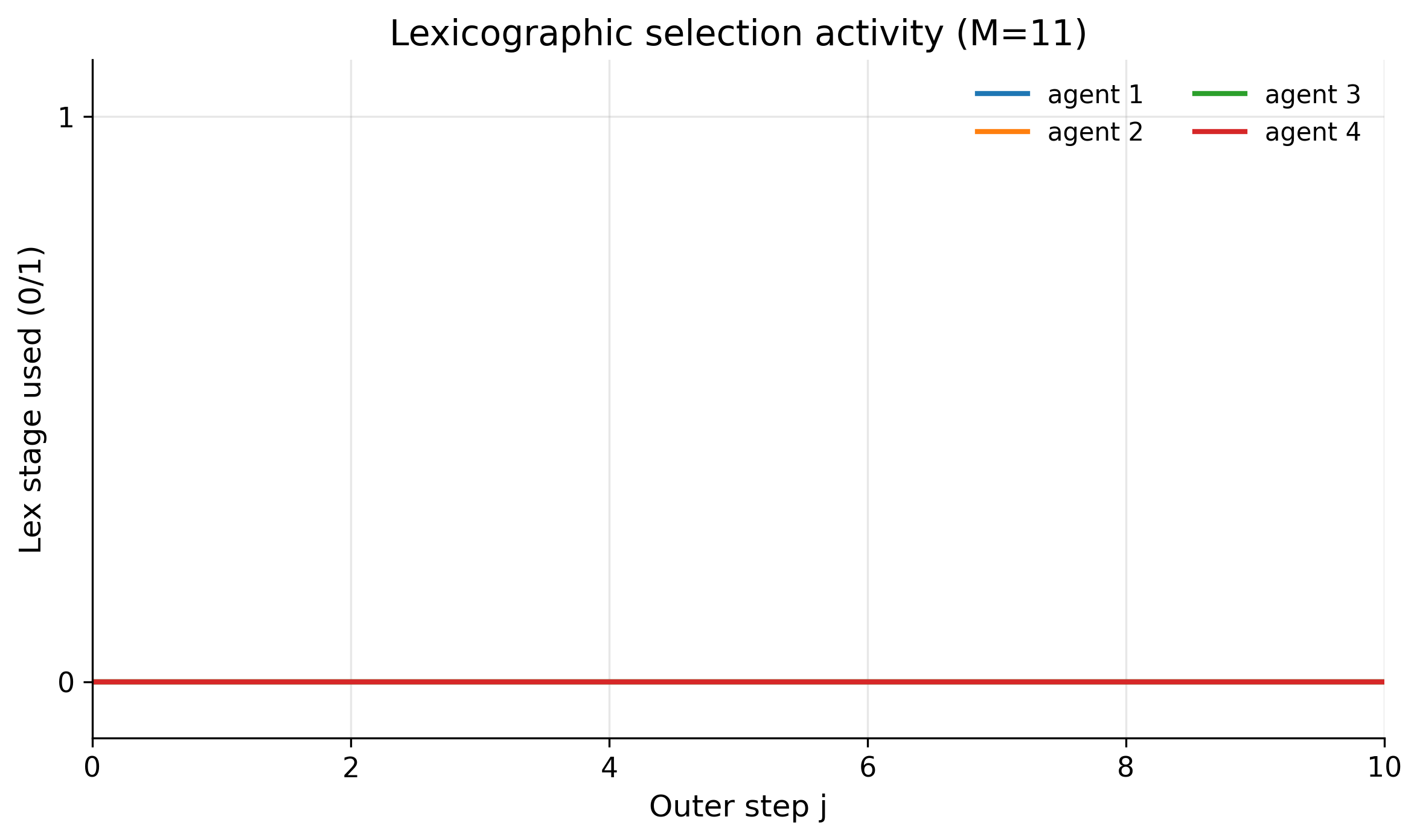}
\caption{Lexicographic activation flag (single-integrator).}
\label{fig:SI_lex}
\end{figure}
The primary cost $J_i^\star(j)$ decreases monotonically over outer steps, reflecting the contraction enforced by the terminal constraint (Figure~\ref{fig:SI_cost}).
\begin{figure}[t]
\centering
\includegraphics[width=0.9\linewidth]{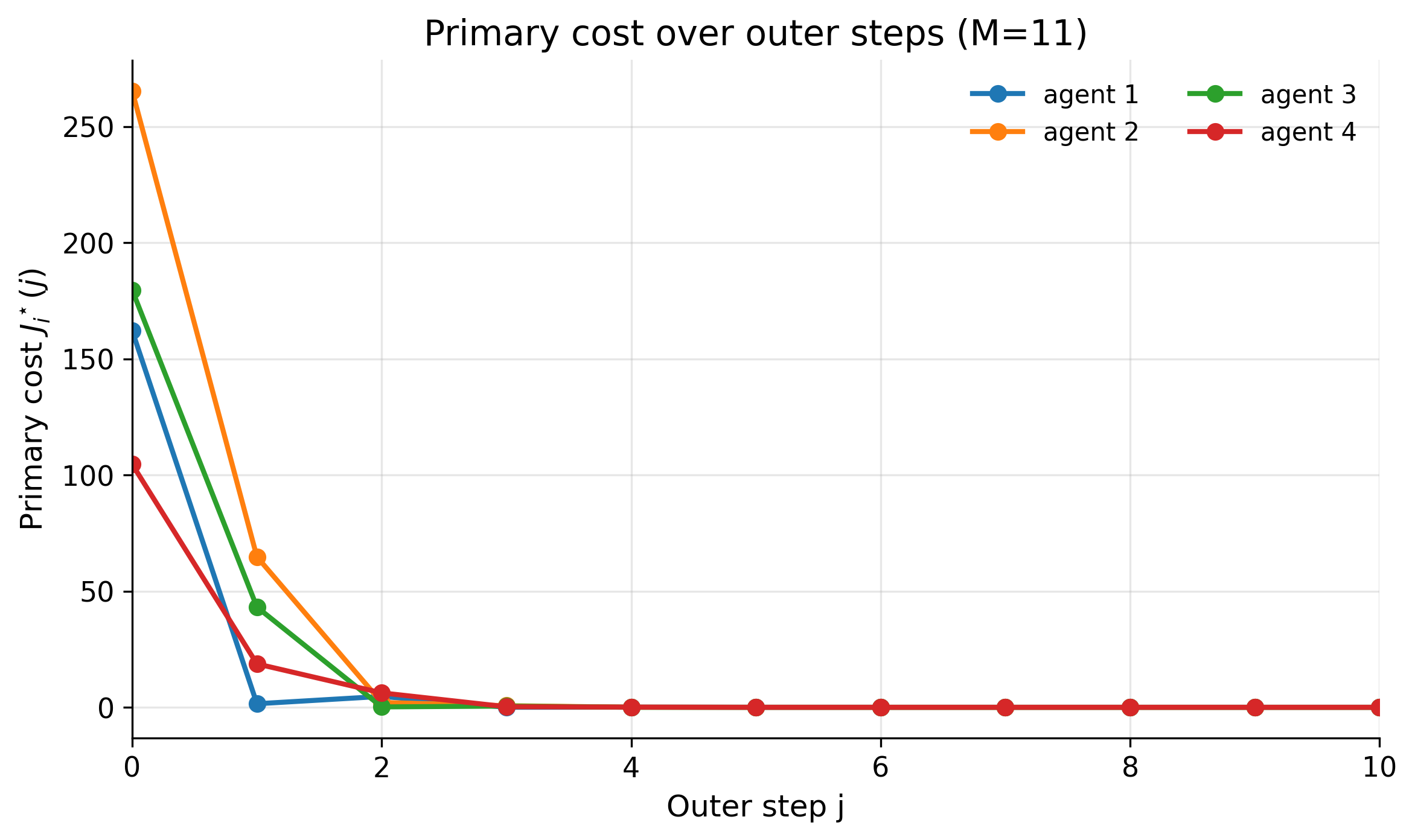}
\caption{Primary cost evolution (single-integrator).}
\label{fig:SI_cost}
\end{figure}
These results confirm the theoretical conclusions of Corollary~\ref{cor:single_integrator_explicit}: the explicit finite-step horizon $M=11$ suffices to guarantee asymptotic consensus without requiring activation of the lexicographic selection stage.

\subsection{Double-integrator case}

We next consider the dynamics
\[
    r_i(t+1)=r_i(t)+v_i(t), \qquad
    v_i(t+1)=v_i(t)+u_i(t).
\]

Figure~\ref{fig:DI_states} shows the position trajectories. Despite the second-order dynamics, asymptotic consensus in position is achieved.
\begin{figure}[t]
\centering
\includegraphics[width=0.9\linewidth]{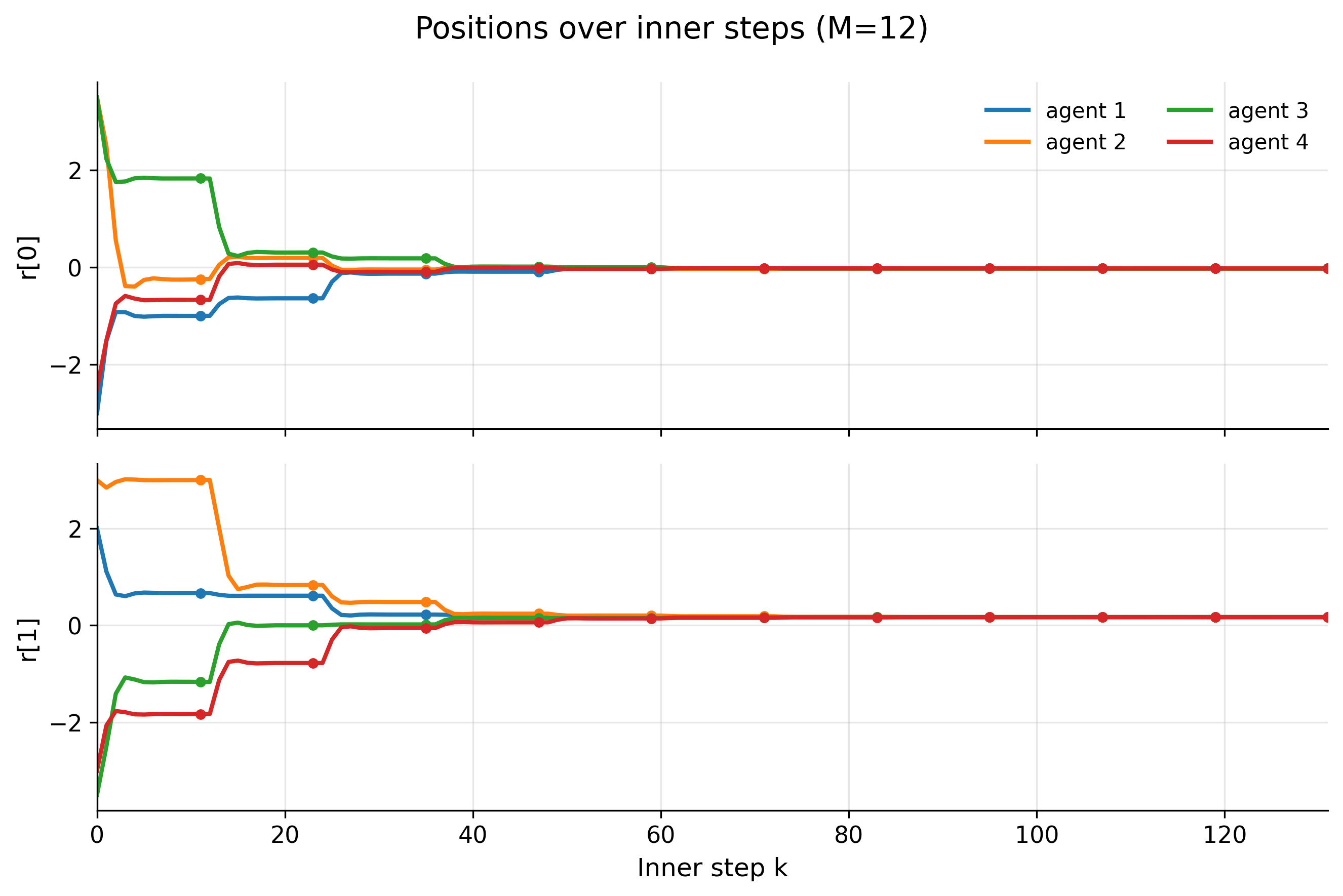}
\caption{Position trajectories for (double-integrator).}
\label{fig:DI_states}
\end{figure}
The velocity trajectories are displayed in Figure~\ref{fig:DI_vel}. As predicted by Corollary~\ref{cor:double_integrator_explicit}, velocities converge to zero.
The control inputs are shown in Figure~\ref{fig:DI_controls}.
The diameter function $V_r(j)$ is depicted in Figure~\ref{fig:DI_Vr}. Again, a monotonic decrease is observed, which confirms asymptotic consensus.
The interiority measure $\phi_{i,j}$ remains strictly positive whenever $\mathcal C_i(j)$ is non-singleton (Figure~\ref{fig:DI_phi}). Hence, the lexicographic stage remains inactive throughout the simulation (Figure~\ref{fig:DI_lex}).
The decay of the primary cost $J_i^\star(j)$ further confirms the contraction mechanism induced by the terminal constraint, see Figure~\ref{fig:DI_cost}.
\begin{figure}[t]
\centering
\includegraphics[width=0.9\linewidth]{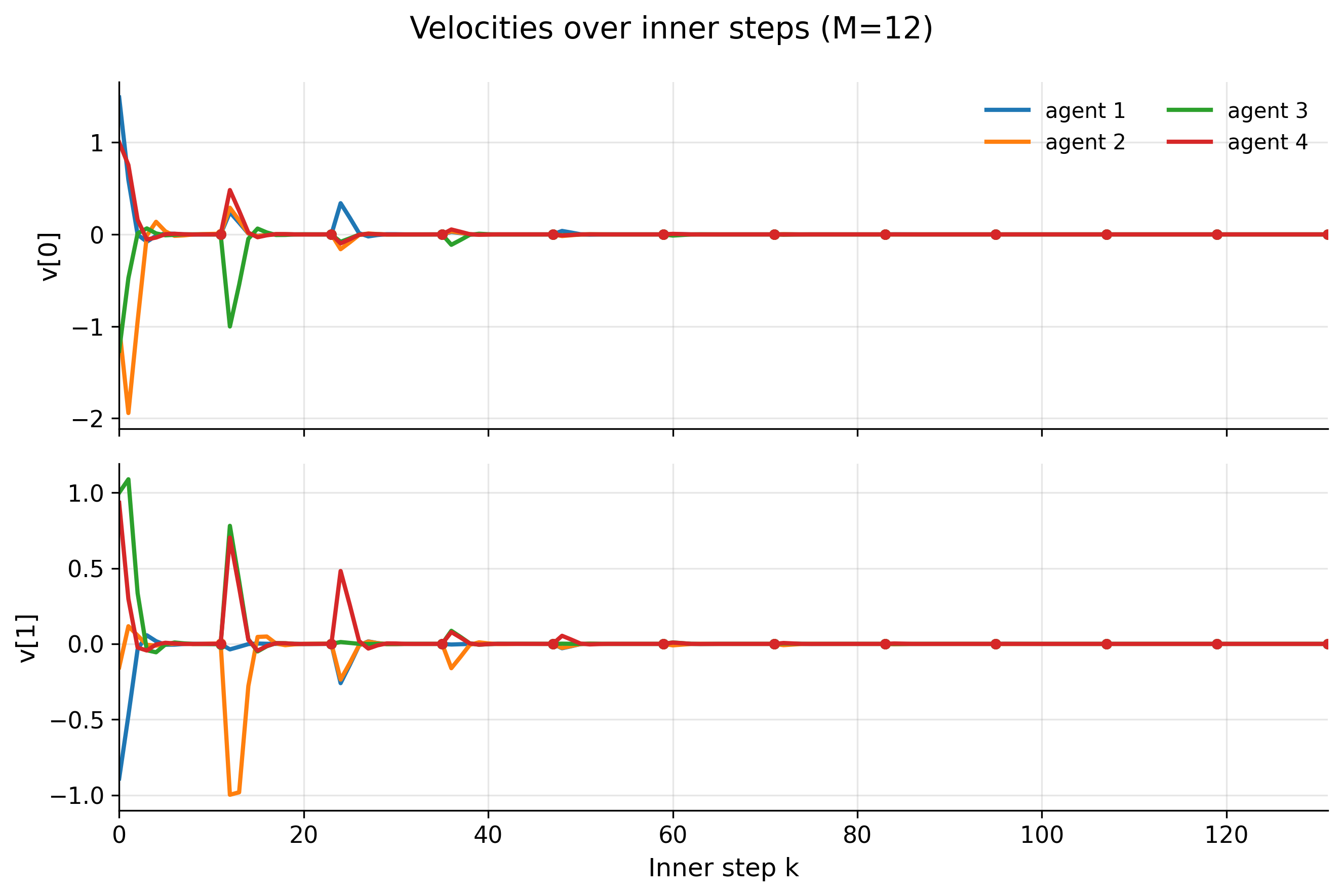}
\caption{Velocity trajectories (double-integrator).}
\label{fig:DI_vel}
\end{figure}
\begin{figure}[t]
\centering
\includegraphics[width=0.9\linewidth]{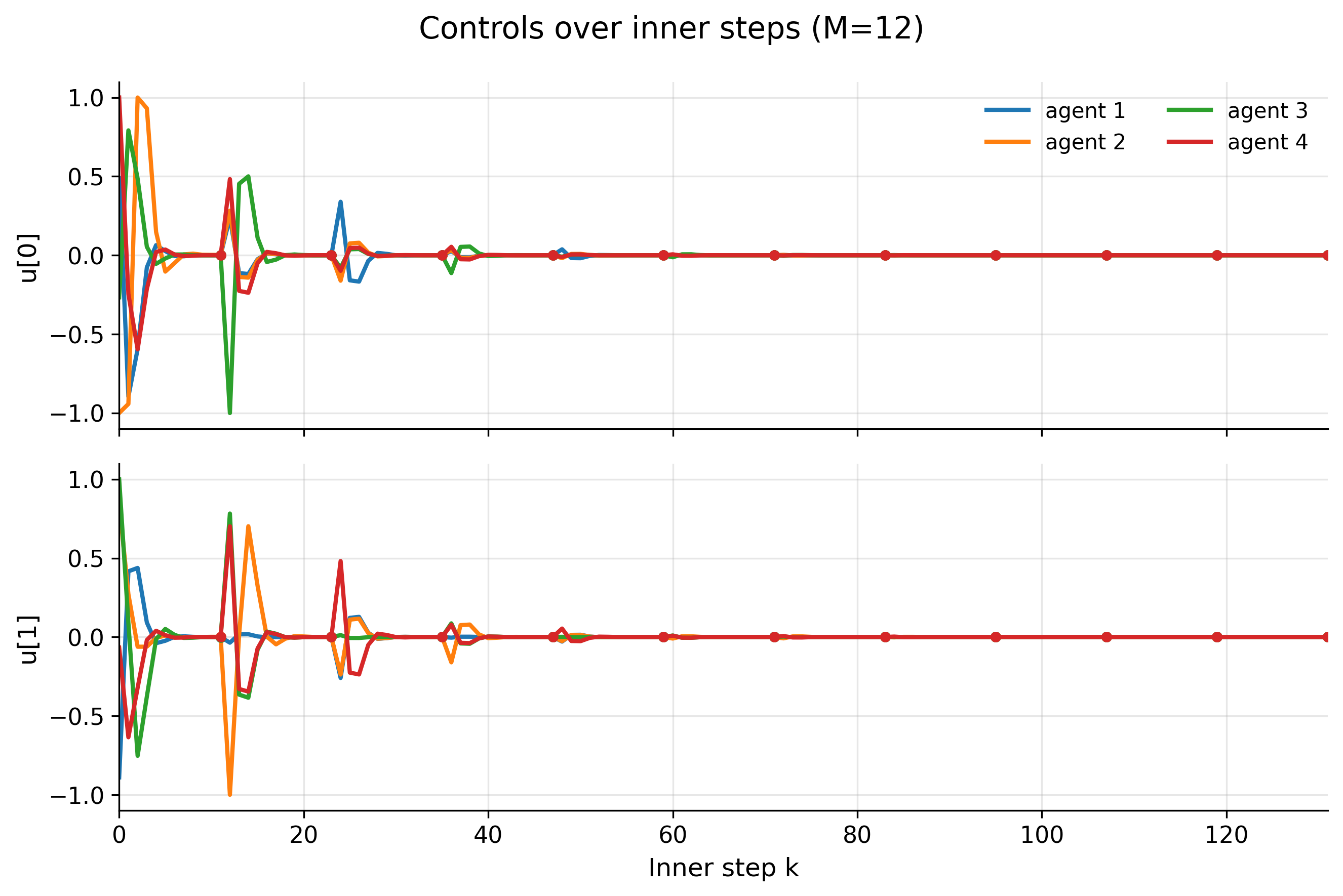}
\caption{Control inputs for (double-integrator).}
\label{fig:DI_controls}
\end{figure}

\begin{figure}[t]
\centering
\includegraphics[width=0.9\linewidth]{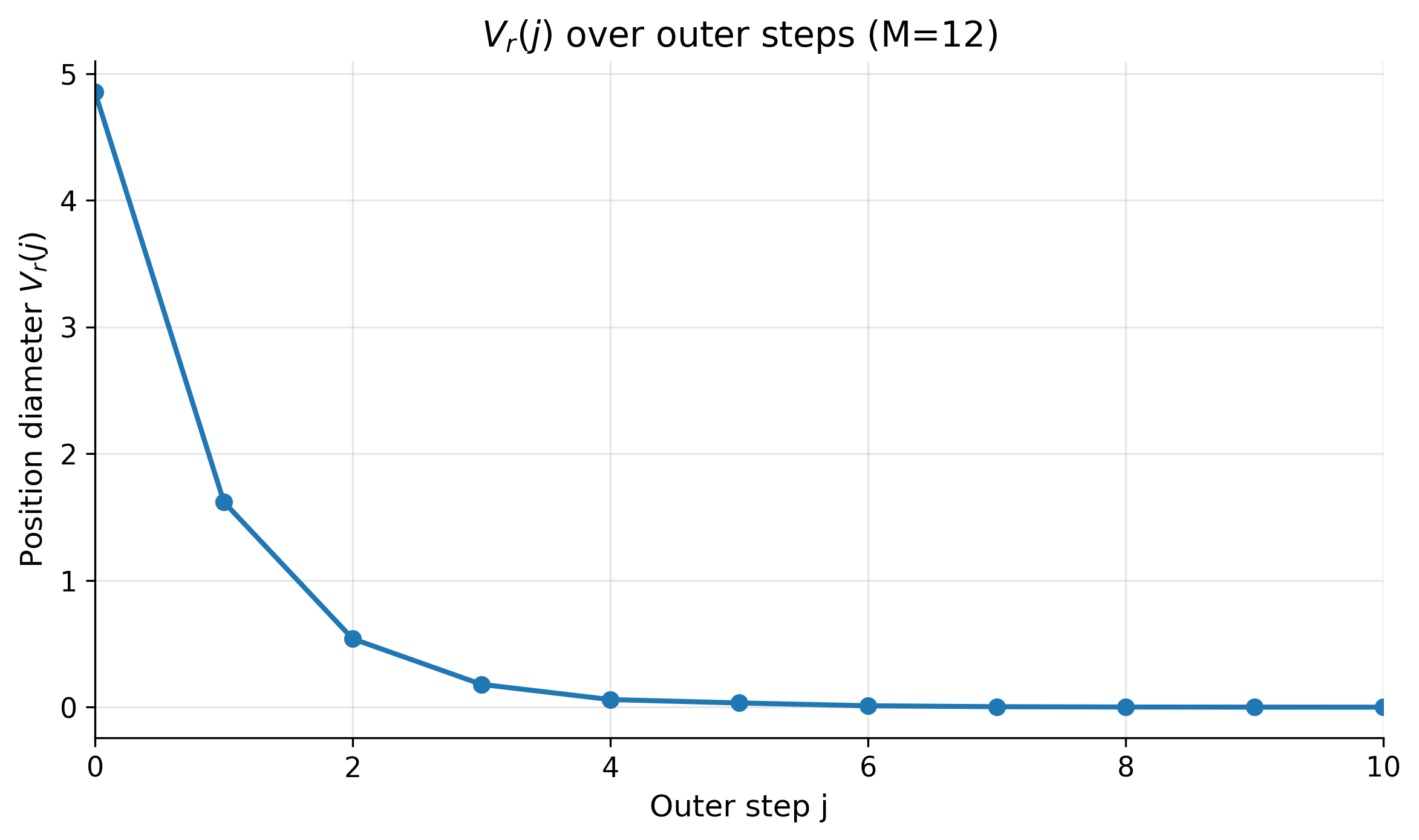}
\caption{Diameter function $V_r(j)$ (double-integrator).}
\label{fig:DI_Vr}
\end{figure}

\begin{figure}[t]
\centering
\includegraphics[width=0.9\linewidth]{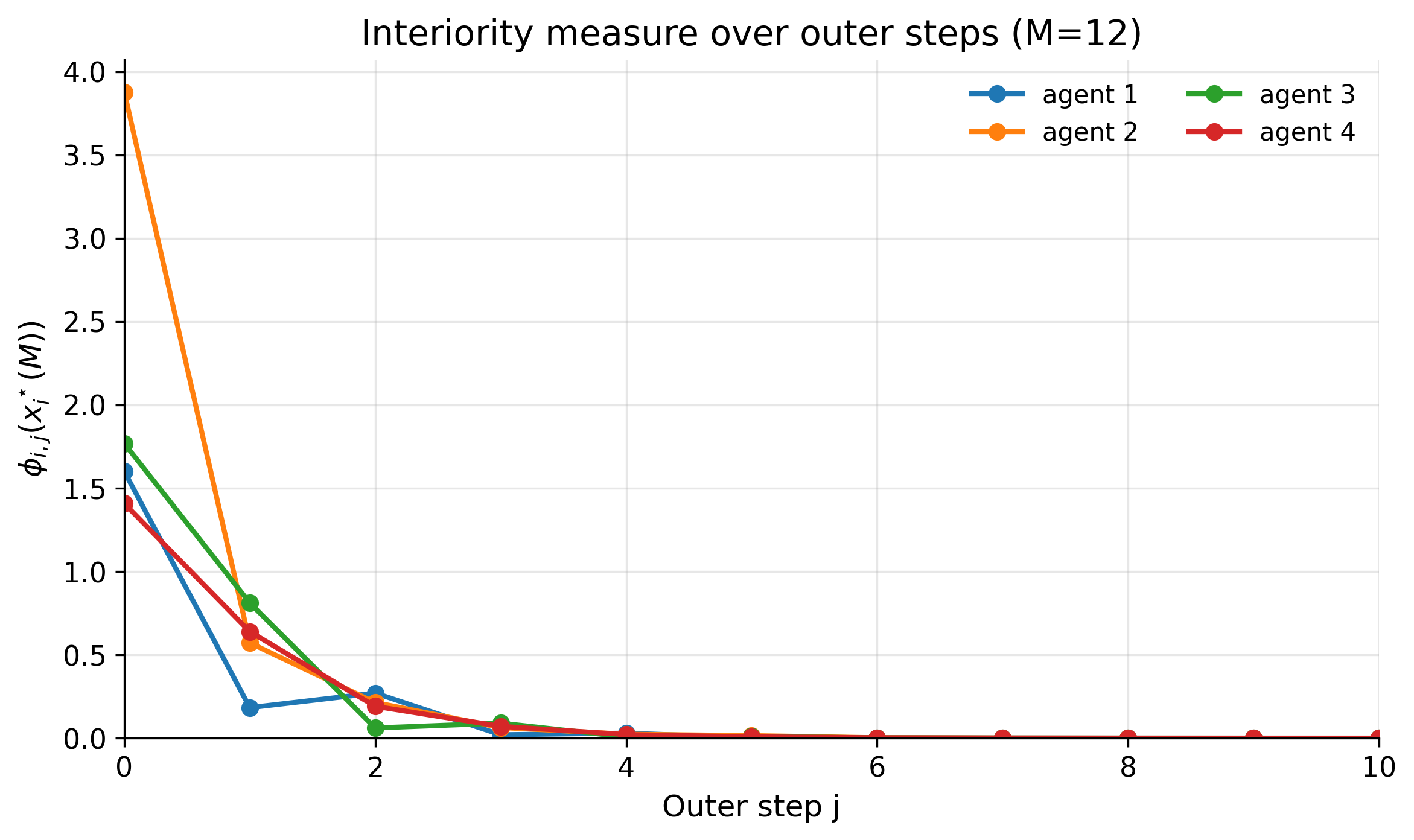}
\caption{Interiority measure $\phi_{i,j}$ (double-integrator).}
\label{fig:DI_phi}
\end{figure}

\begin{figure}[t]
\centering
\includegraphics[width=0.9\linewidth]{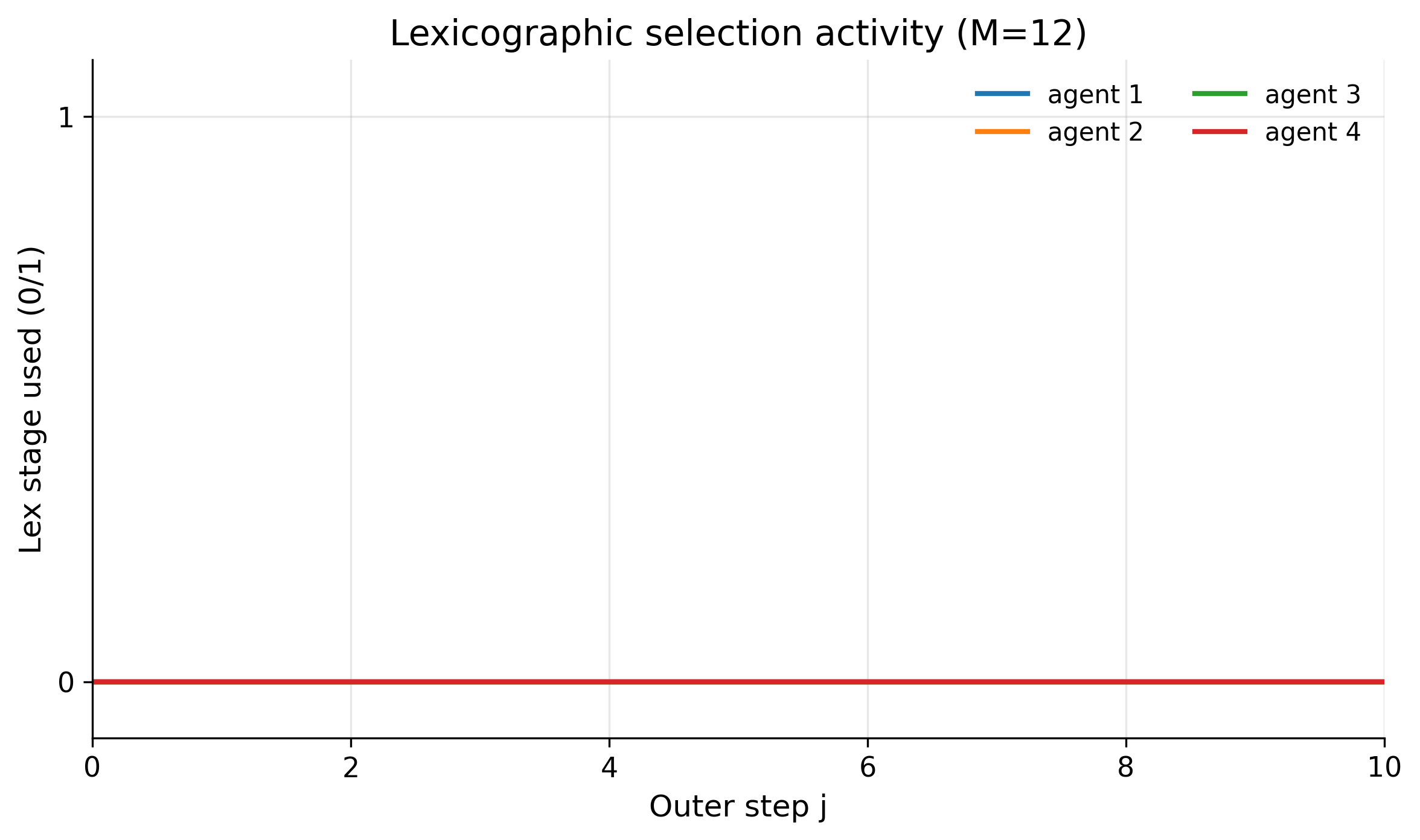}
\caption{Lexicographic activation flag (double-integrator).}
\label{fig:DI_lex}
\end{figure}

\begin{figure}[t]
\centering
\includegraphics[width=0.9\linewidth]{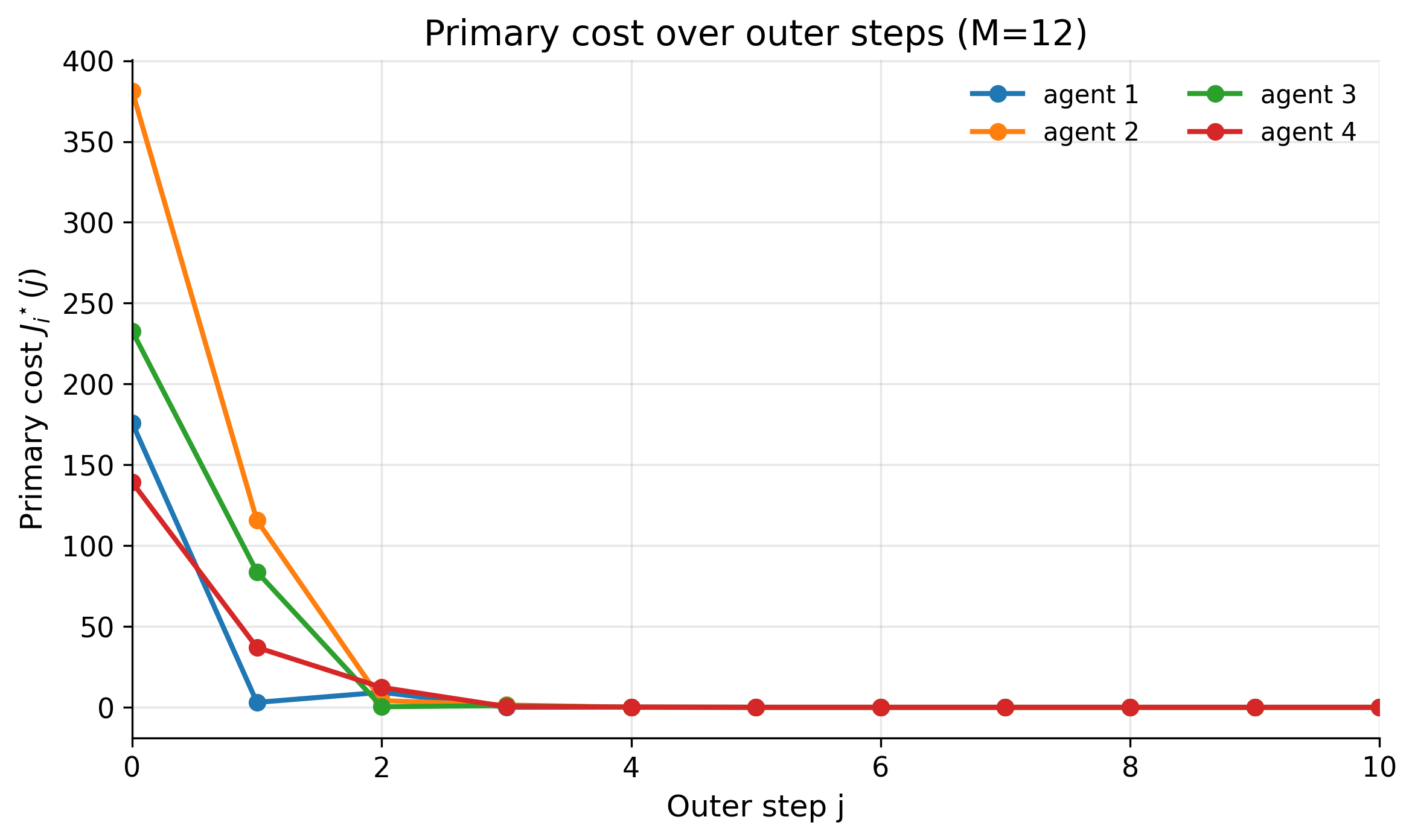}
\caption{Primary cost evolution (double-integrator).}
\label{fig:DI_cost}
\end{figure}

\subsection{Discussion}

The simulations demonstrate that:

\begin{itemize}
    \item The explicit finite-step bounds on $M$ derived in Corollaries~\ref{cor:single_integrator_explicit} and~\ref{cor:double_integrator_explicit} are sufficient in practice.
    \item The contraction of $V_r(j)$ is consistent with the theoretical finite-step Lyapunov function and strictness arguments underlying Theorem~\ref{thm:moreau_nes_suf}.
    \item The lexicographic selection stage (Step 7 of Algorithm~\ref{alg:dmproutine_lex} is not activated in these scenarios, since the primary optimization already yields terminal points in the relative interior of $\mathcal C_i(j)$.
\end{itemize}

Thus, the lexicographic stage serves as a safeguard mechanism ensuring strict interiority when necessary, while remaining inactive when the primary problem already enforces the desired strictness property.
Overall, the distributed finite-step MPC with lexicographic selection achieves asymptotic consensus in both single- and double-integrator settings, fully consistent with the theoretical development of Section~\ref{sec:linear-dynamics}.

\subsection{Computational complexity and implementation remarks}


For each agent $i$ and outer step $j$, the primary and (if activated) secondary problems are convex programs of fixed horizon length $M$. 
The problem size gives a standard and implementation-agnostic complexity proxy \emph{problem size}:
the number of scalar decision variables $n_{\mathrm{var}}$, the number of scalar equality constraints $n_{\mathrm{eq}}$, the number of scalar inequality constraints $n_{\mathrm{ineq}}$.

Since interior-point type solvers typically scale superlinearly in $n_{\mathrm{var}}$ (often heuristically like $\mathcal O(n_{\mathrm{var}}^3)$ for dense instances), we report, for each $(i,j)$, the tuple
$(n_{\mathrm{var}},n_{\mathrm{eq}},n_{\mathrm{ineq}})$
as a solver-independent complexity proxy, together with the measured wall-clock solve times
$T^{\mathrm{prim}}_{i}(j)$, $T^{\mathrm{lex}}_{i}(j)$,
$T^{\mathrm{tot}}_{i}(j)=T^{\mathrm{prim}}_{i}(j)+T^{\mathrm{lex}}_{i}(j)$,
where $T^{\mathrm{lex}}_{i}(j)=0$ if the lexicographic stage is not activated.
The tuple $(n_{\mathrm{var}},n_{\mathrm{eq}},n_{\mathrm{ineq}})$ quantifies the size of the local MPC problem and can be used to compare different horizons $M$, models (single- vs.\ double-integrator), and constraint sets. The solve-time traces $T^{\mathrm{prim}}_{i}(j)$ and $T^{\mathrm{lex}}_{i}(j)$ provide an empirical measure of the real-time feasibility of Algorithm~\ref{alg:dmproutine_lex} under the chosen solver and hardware.
Together, these indicators yield a practical complexity characterization of the distributed finite-step MPC implementation.
Table~\ref{tab:comp_complexity_si} reports the computational complexity for the single-integrator case.

\begin{table}[ht]
\centering
\caption{Per-agent computational complexity for single-integrators.}
\label{tab:comp_complexity_si}
\renewcommand{\arraystretch}{1.15}
\begin{tabular}{c|c|c|c|c}
\hline
Agent $i$ 
& \begin{tabular}{c}
$T_{\mathrm{total}}$ [ms]\\
{\footnotesize mean / max}
\end{tabular}
& \begin{tabular}{c}
$T_{\mathrm{primary}}$ [ms] \\
{\footnotesize mean / max}
\end{tabular}
& \begin{tabular}{c}
$T_{\mathrm{lex}}$ [ms] \\
{\footnotesize mean / max}
\end{tabular}
& $(n_{\mathrm{var}}, n_{\mathrm{eq}}, n_{\mathrm{ineq}})$ \\
\hline
1 & $50.2\,\,/\,\,77.0$ & $50.2\,\,/\,\,77.0$ & $0\,\,/\,\,0$ & $(25,3,96)$ \\
2 & $47.03\,\,/\,\,69.37$ & $47.03\,\,/\,\,69.37$ & $0\,\,/\,\,0$ & $(25,3,96)$ \\
3 & $46.90\,\,/\,\,69.09$ & $46.90\,\,/\,\,69.09$ & $0\,\,/\,\,0$ & $(25,3,96)$ \\
4 & $46.99 \,\,/\,\, 73.41$ & $46.99 \,\,/\,\, 73.41$ & $0\,\,/\,\,0$ & $(25,3,96)$ \\
\hline
\end{tabular}
\end{table}

Table~\ref{tab:comp_complexity_di} reports the computational complexity for the double-integrator case.

\begin{table}[ht]
\centering
\caption{Per-agent computational complexity for double-integrators.}
\label{tab:comp_complexity_di}
\renewcommand{\arraystretch}{1.15}
\begin{tabular}{c|c|c|c|c}
\hline
Agent $i$ 
& \begin{tabular}{c}
$T_{\mathrm{total}}$ [ms]\\
{\footnotesize mean / max}
\end{tabular}
& \begin{tabular}{c}
$T_{\mathrm{primary}}$ [ms] \\
{\footnotesize mean / max}
\end{tabular}
& \begin{tabular}{c}
$T_{\mathrm{lex}}$ [ms] \\
{\footnotesize mean / max}
\end{tabular}
& $(n_{\mathrm{var}}, n_{\mathrm{eq}}, n_{\mathrm{ineq}})$ \\
\hline
1 & $89.80 \,\,/\,\,111.3$ & $89.80 \,\,/\,\,111.3$ & $0\,\,/\,\,0$ & $(27,3,156)$ \\
2 & $89.09 \,\,/\,\, 108.41$ & $89.09 \,\,/\,\, 108.41$ & $0\,\,/\,\,0$ & $(27,3,156)$ \\
3 & $87.32 \,\,/\,\, 108.3$ & $87.32 \,\,/\,\, 108.3$ & $0\,\,/\,\,0$ & $(27,3,156)$ \\
4 & $88.46 \,\,/\,\, 114.7$ & $88.46 \,\,/\,\, 114.7$ & $0\,\,/\,\,0$ & $(27,3,156)$ \\
\hline
\end{tabular}
\end{table}

\section{Acknowledgment}
The author sincerely thanks Lars Grüne and Fabian Wirth for insightful and motivating discussions that initiated and inspired this research work.

\bibliographystyle{IEEEtran}
\bibliography{DistrubtedMutiStepMPC} 

\end{document}